\documentclass[11pt]{amsart}
\usepackage{mathrsfs,latexsym,amsfonts,amssymb}
\setcounter{page}{1} \setlength{\textwidth}{14.6cm}
\setlength{\textheight}{22.5cm} \setlength{\evensidemargin}{0.8cm}
\setlength{\oddsidemargin}{0.8cm} \setlength{\topmargin}{0.8cm}

\newtheorem{theorem}{Theorem}[section]
\newtheorem{lemma}[theorem]{Lemma}
\newtheorem{corollary}[theorem]{Corollary}
\newtheorem{question}[theorem]{Question}
\newtheorem{remark}[theorem]{Remark}
\theoremstyle{definition}
\newtheorem{definition}[theorem]{Definition}

\newtheorem{problem}[theorem]{Problem}

\begin{document}

\title[Hyperspaces with a countable character of closed subsets]
{Hyperspaces with a countable character of closed subsets}

\author{Chuan Liu}
\address{(Chuan Liu)Department of Mathematics,
Ohio University Zanesville Campus, Zanesville, OH 43701, USA}
\email{liuc1@ohio.edu}

 \author{Fucai Lin}
 \address{(Fucai Lin) 1. School of mathematics and statistics, Minnan Normal University, Zhangzhou 363000, P. R. China; 2. Fujian Key Laboratory of Granular Computing and Application, Minnan Normal University, Zhangzhou 363000, P. R. China}
 \email{linfucai@mnnu.edu.cn}

\thanks{The second author is supported by the Key Program of the Natural Science Foundation of Fujian Province (No: 2020J02043), the NSFC (No. 11571158), the Institute of Meteorological Big Data-Digital Fujian and Fujian Key Laboratory of Data Science and Statistics.}

\keywords{hyperspaces; Vietoris topology; Fell Topology; $D_{1}$-space; $D_{0}$-space; $\gamma$-space.}
\subjclass[2020]{primary 54B20; secondary 54D99, 54E20}

\begin{abstract}
For a regular space $X$, the hyperspace $(CL(X), \tau_{F})$ (resp.,
$(CL(X), \tau_{V})$) is the space of all nonempty closed subsets of
$X$ with the Fell topology (resp., Vietoris topology). In this
paper, we give the characterization of the space $X$ such that the
hyperspace $(CL(X), \tau_{F})$ (resp., $(CL(X), \tau_{V})$) with a
countable character of closed subsets. We mainly prove that $(CL(X),
\tau_F)$ has a countable character on each closed subset if and only
if $X$ is compact metrizable, and $(CL(X), \tau_F)$ has a countable
character on each compact subset if and only if $X$ is locally
compact and separable metrizable. Moreover, we prove that
$(\mathcal{K}(X), \tau_V)$ have the compact-$G_\delta$ property if
and only if $X$ have the compact-$G_\delta$ property and  every
compact subset of $X$ is metrizable.
\end{abstract}

\maketitle
\section{Introduction}
In this paper, the base space $X$ is always supposed to be $T_1$ and
regular. Let $\mathbb{N}$ and $\omega$ denote the sets of all
positive integers and all non-negative integers, respectively. Given
a topological space $X$, the following collections of subsets of $X$
are the {\it hyperspaces} that we will consider.

\smallskip
$\mathcal{F}_n(X)=\{A\subset X: |A|\leq n, A\neq\emptyset\}$,

\smallskip
$\mathcal{F}(X)=\bigcup\{\mathcal{F}_n(X): n\in \mathbb{N}\}$,

\smallskip
$\mathcal{K}(X)=\{K\subset X: K\ \mbox{is a non-empty, compact
subset in}\ X\}$ and

\smallskip
$CL(X)=\{H\subset X: H\ \mbox{is non-empty, closed in}\ X\}.$

\smallskip
For any open subsets $U_1, \ldots, U_k$ of space $X$, let $$\langle
U_1, ..., U_k\rangle =\{H\in CL(X): H\subset {\bigcup}_{i=1}^k U_i\
\mbox{and}\ H\cap U_j\neq \emptyset, 1\leq j\leq k\}.$$ We endow
$CL(X)$ with {\it Vietoris topology} defined as the topology
generated by $$\{\langle U_1, \ldots, U_k\rangle: U_1, \ldots, U_k\
\mbox{are open subsets of}\ X, k\in \mathbb{N}\}.$$ For any open
subset $U$ of $X$, let $$U^{-}=\{H\in CL(X): H\cap U\neq
\emptyset\}$$ and $$U^{+}=\{H\in CL(X): H\subset U\}.$$ We endow
$CL(X)$ with {\it Fell topology} defined as the topology generated
by the following two families as a subbase:
$$\{U^{-}: U\ \mbox{is any non-empty open in}\ X\}$$ and $$\{(K^{c})^{+}: K\ \mbox{is a compact subset of}\ X, K\neq X\}.$$
We denote the hyperspace with the Vietoris topology and the Fell
topology by $(CL(X), \tau_{V})$ and $(CL(X), \tau_{F})$
respectively. For $T_2$ space, it is well known that $F_1(X)$ is
homeomorphic to $X$ in hyperspaces with the Vietoris topology or the
Fell topology, so we consider all hyperspaces have a closed copy of
$X$.

Let $X$ be a topological space and $A \subseteq X
(\mathcal{A}\subset (CL(X), \tau_V))$ be a subset of $X ((CL(X),
\tau_V))$. Then the \emph{closure} of $A (\mathcal{A})$ in $X
((CL(X), \tau_V))$ is denoted by $\overline{A} (Cl(\mathcal{A}))$,
and $NI(X)$ and $I(X)$ are the sets of all non-isolated points and
isolated points of $X$ respectively. For undefined notations and
terminologies, the reader may refer to \cite{E1989},  \cite{G1984}
and \cite{M1951}.

It is well known that the topics of the hyperspace has been the
focus of much research, see \cite{B1993, HP2002, M1951, NP2015}.
There are many results on the hyperspace $CL(X)$ of a topological
space $X$ equipped with various topologies. In this paper, we endow
$CL(X)$ with the Vietoris topology $\tau_{V}$ and the Fell topology
$\tau_{F}$ respectively. In 1997, Hol\'{a} and Levi in
\cite[Corollary 1.8]{HL1997} gave a characterization of those spaces
$X$ such that $(CL(X), \tau_{V})$ is first countable; in 2003,
Hol\'{a}, Pelant and Zsilinszky in \cite[Theorem 3.1]{HPZ2003}
proved that $(CL(X), \tau_{V})$ is developable iff $(CL(X),
\tau_{V})$ is Moore iff $(CL(X), \tau_{V})$ is metrizable iff
$(CL(X), \tau_{V})$ has a $\sigma$-discrete network iff $X$ is
compact and metrizable. Recently, F. Lin, R. Shen and C. Liu in
\cite{LL2021} considered the following two problems, and gave some
partial answers to Problems~\ref{pr1} and \ref{pr2} respectively.

\begin{problem}\cite[Problem 1.1]{LL2021}\label{pr1}
Let $\mathcal{C}$ be a proper subclass of the class of
first-countable spaces, and let $\mathcal{P}$ be a topological
property. If $(CL(X), \tau_{V})\in\mathcal{C}$, does $X$ have the
property $\mathcal{P}$?
\end{problem}

\begin{problem}\cite[Problem 1.2]{LL2021}\label{pr2}
Let $\mathcal{C}$ be a class of generalized metrizable spaces. If
$(CL(X), \tau_{V})\in\mathcal{C}$, is $X$ compact and metrizable?
\end{problem}

Recall that a space $X$ is a $D_1$-space
\cite{A1966} if every closed subset of $X$ has a countable local
base and $X$ is a $D_0$-space \cite{S1975} if every compact subset
of $X$ has a countable local base. Clearly, each $D_1$-space is a
$D_{0}$-space, each $D_{0}$-space is first-countable, and each Moore
space or space with point-countable base or $\gamma$-space is a
$D_0$-space \cite{G1984}. In \cite{DL1995}, M. Dai and C. Liu
discussed a characterization, some covering properties and the
metrization of $D_{1}$-spaces. In \cite{LL2021},  F. Lin, R. Shen
and C. Liu proved that under (MA+$\neg$CH), $(CL(X), \tau_V)$ is a
$\gamma$-space\footnote{A space $(X, \tau)$ is a {\it
$\gamma$-space} there exists a function $g: \omega\times X\to \tau$
such that (i) $\{g(n, x): n\in \omega\}$ is a base at $x$; (ii) for
each $n\in \omega$ and $x\in X$, there exists $m\in \omega$ such
that $y\in g(m, x)$ implies $g(m, y)\subset g(n, x)$.} if and only
if $(CL(X), \tau_V)$ is a $D_0$-space if and only if $X$ is a
separable metrizable space and $NI(X)$ of $X$ is compact. We listed
some properties of $D_1$-spaces in \cite{DL1995, LL2021} as follows.

\smallskip
{\bf Fact 1:} If $X$ is a $D_1$-space, then $NI(X)$ is countably
compact, see \cite{DL1995};

\smallskip
{\bf Fact 2:} If $X$ is a $D_1$-space, then $X$ is metrizable if and
only if $NI(X)$ is metrizable, see \cite{DL1995}.

\smallskip
{\bf Fact 3:} If $(CL(X), \tau_V)$ is a $D_1$-space, then $X$ is
compact metrizable, see \cite{LL2021}.

It was proved that if $X$ is a Moore space or space with a
point-countable base, so is $(\mathcal{K}(X), \tau_V)$, see
\cite{TM1, TM2} and \cite[Theorem 3.11]{LSL2021} respectively.

Our paper is organized as follows. In Section 2, we mainly discuss
the $D_{1}$-property of the hyperspaces. We first prove that, for a
space $X$, the hyperspace $(CL(X), \tau_{F})$ is a $D_{1}$-space if
and only if $X$ is compact metrizable. Then we prove that, for a
space $X$, $(\mathcal{K}(X), \tau_{V})$ is a $D_{1}$-space if and
only if $(\mathcal{F}_{2}(X), \tau_{V})$ is a $D_{1}$-space if and
only if $X$ is discrete or compact metrizable. In Section 3,  we
mainly discuss the $D_{0}$-property of the hyperspaces. We prove
that, for a space $X$, $(CL(X), \tau_{F})$ is a $D_{0}$-space if and
only if $(\mathcal{F}_{2}(X), \tau_{F})$ is a $D_{0}$-space if and
only if $X$ is locally compact and separable metrizable. We also
prove that, for a space $X$, $(\mathcal{K}(X), \tau_{V})$ is a
$D_{0}$-space if and only if $(\mathcal{F}_{2}(X), \tau_{V})$ is a
$D_{0}$-space if and only if $X$ is a $D_{0}$-space and each compact
subset of $X$ is metrizable. Finally, we discuss the
$G_\delta$-property of hyperspaces and prove that $(\mathcal{K}(X),
\tau_V)$ have the compact-$G_\delta$ property if and only if $X$
have the compact-$G_\delta$ property and  every compact subset of
$X$ is metrizable.

\smallskip
\section{The $D_{1}$-property of the hyperspaces}
In this section, we mainly study the $D_{1}$-property of the
hyperspaces $(CL(X), \tau_{F})$ and $(\mathcal{K}(X), \tau_{F})$ of
those spaces $X$ respectively. First, we give a characterization of
a space $X$ such that $(CL(X), \tau_F)$ is a $D_1$-space. In order
to show that, we need some lemmas.

\begin{lemma}\label{l11111}
\cite[Lemma 2.3.1]{M1951} Let $U_1, ..., U_n$ and $V_1,..., V_m$ be
subsets of a space $X$. Then, in Vietoris topology $(CL(X),
\tau_V)$, we have $\langle U_1, ..., U_n\rangle\subset \langle
V_1,..., V_m\rangle$ if and only if $\bigcup_{j=1}^nU_j\subset
\bigcup_{j=1}^mV_j$ and for every $V_i$ there exists a $U_k$ such
that $U_k\subset V_i$.
\end{lemma}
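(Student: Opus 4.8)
The plan is to establish the two implications separately, using finite nonempty subsets of $X$ as the basic test elements of the hyperspace. Since the standing hypothesis of the paper is that $X$ is $T_1$, every finite nonempty subset of $X$ is closed, hence lies in $CL(X)$, and such a set belongs to $\langle U_1,\dots,U_n\rangle$ exactly when it is contained in $\bigcup_{j=1}^n U_j$ and meets each $U_j$. I would first reduce to the case in which every $U_j$ is nonempty (otherwise $\langle U_1,\dots,U_n\rangle=\emptyset$ and the equivalence is degenerate), and fix once and for all a point $x_j\in U_j$ for $1\le j\le n$.

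For the easy implication ($\Leftarrow$) I would simply take $H\in\langle U_1,\dots,U_n\rangle$ and check directly, from $\bigcup_j U_j\subseteq\bigcup_i V_i$ together with a choice of $k(i)$ with $U_{k(i)}\subseteq V_i$, that $H\subseteq\bigcup_i V_i$ and $H\cap V_i\supseteq H\cap U_{k(i)}\neq\emptyset$ for each $i$; hence $H\in\langle V_1,\dots,V_m\rangle$, so $\langle U_1,\dots,U_n\rangle\subseteq\langle V_1,\dots,V_m\rangle$.

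For the substantive implication ($\Rightarrow$), assuming $\langle U_1,\dots,U_n\rangle\subseteq\langle V_1,\dots,V_m\rangle$, I would produce two families of test sets. To get $\bigcup_j U_j\subseteq\bigcup_i V_i$: given $x\in U_{j_0}$, feed the finite closed set $F=\{x\}\cup\{x_j: j\neq j_0\}$, which lies in $\langle U_1,\dots,U_n\rangle$, into $\langle V_1,\dots,V_m\rangle$ to conclude $x\in F\subseteq\bigcup_i V_i$. To get the refinement clause: fix $i$ and argue by contradiction — if $U_k\not\subseteq V_i$ for every $k$, pick $y_k\in U_k\setminus V_i$, note that $G=\{y_1,\dots,y_n\}\in\langle U_1,\dots,U_n\rangle\subseteq\langle V_1,\dots,V_m\rangle$, so $G$ must meet $V_i$, contradicting $y_k\notin V_i$ for all $k$. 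Hence some $U_k\subseteq V_i$, as required.

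I do not anticipate a genuine obstacle here: the whole argument is a direct unwinding of the definition of the subbasic Vietoris sets. The only points needing care are the degenerate cases (an empty $U_j$, or small $n$ or $m$) and the implicit reliance on $T_1$-ness, which is exactly what makes the finite test sets $F$ and $G$ closed, and hence legitimate members of $CL(X)$; over a merely $T_0$ space one would test instead with closures of finite sets and read the clause ``$U_k\subseteq V_i$'' accordingly.
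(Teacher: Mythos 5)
Your proof is correct, and it is the standard argument: the paper itself offers no proof of this lemma, citing it directly from Michael's 1951 paper, and your verification by finite test sets (which lie in $CL(X)$ precisely because the standing $T_1$ hypothesis makes finite sets closed) is exactly how the cited result is established there. Your reduction to the case where every $U_j$ is nonempty is not merely cosmetic: if some $U_j=\emptyset$ then $\langle U_1,\dots,U_n\rangle=\emptyset$ and the stated biconditional can genuinely fail (e.g.\ $U_1=\emptyset$, $U_2=X$, $V_1=\emptyset$ gives the inclusion of hyperspace sets but not $\bigcup_j U_j\subseteq\bigcup_i V_i$), so flagging and excluding that case is the right move.
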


\begin{lemma}\label{l22222}
\cite[Lemma 2.3.2]{M1951} Let $U_1, ..., U_n$ be subsets of a space
$X$. Then $$Cl(\langle U_1, ..., U_n\rangle)=\langle \overline{U}_1,
..., \overline{U}_n\rangle,$$where $Cl(\langle U_1, ...,
U_n\rangle)$ denotes the closure of the set $\langle U_1, ...,
U_n\rangle$ in $(\mathcal{K}(X), \tau_V)$.
\end{lemma}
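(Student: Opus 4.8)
The plan is to establish the two inclusions separately, working inside the subspace $(\mathcal{K}(X),\tau_V)$ and using that finite subsets of $X$ are compact. For $Cl(\langle U_1,\dots,U_n\rangle)\subseteq\langle\overline{U}_1,\dots,\overline{U}_n\rangle$ I would show that every compact $K\notin\langle\overline{U}_1,\dots,\overline{U}_n\rangle$ has a basic Vietoris neighbourhood disjoint from $\langle U_1,\dots,U_n\rangle$. If $K\not\subseteq\bigcup_{i=1}^n\overline{U}_i$, then with $V=X\setminus\bigcup_{i=1}^n\overline{U}_i$ the set $\langle X,V\rangle$ is a neighbourhood of $K$, and every $H\in\langle X,V\rangle$ meets $V\subseteq X\setminus\bigcup_{i=1}^n U_i$, so $H\notin\langle U_1,\dots,U_n\rangle$. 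If instead $K\cap\overline{U}_j=\emptyset$ for some $j\leq n$, then $\langle X\setminus\overline{U}_j\rangle$ is a neighbourhood of $K$ and every $H$ in it is disjoint from $U_j$, so again $H\notin\langle U_1,\dots,U_n\rangle$. Either way $K\notin Cl(\langle U_1,\dots,U_n\rangle)$.

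For the reverse inclusion, fix a compact $K$ with $K\subseteq\bigcup_{i=1}^n\overline{U}_i$ and $K\cap\overline{U}_i\neq\emptyset$ for every $i\leq n$, and an arbitrary basic neighbourhood $\langle W_1,\dots,W_k\rangle$ of $K$, so that $K\subseteq\bigcup_{l=1}^k W_l$ and $K\cap W_l\neq\emptyset$ for every $l\leq k$. I would construct a finite set $L$ lying in $\langle U_1,\dots,U_n\rangle\cap\langle W_1,\dots,W_k\rangle$ as follows: for each $i\leq n$ pick $x_i\in K\cap\overline{U}_i$, choose $l(i)$ with $x_i\in W_{l(i)}$, and (since $W_{l(i)}$ is open and $x_i\in\overline{U}_i$) pick $a_i\in W_{l(i)}\cap U_i$; symmetrically, for each $l\leq k$ pick $y_l\in K\cap W_l$, choose $i(l)$ with $y_l\in\overline{U}_{i(l)}$, and pick $b_l\in W_l\cap U_{i(l)}$. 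Then $L=\{a_i:i\leq n\}\cup\{b_l:l\leq k\}$ is finite, hence a compact subset of $X$, and a direct check gives $L\subseteq\bigcup_i U_i$, $a_j\in L\cap U_j$, $L\subseteq\bigcup_l W_l$, and $b_l\in L\cap W_l$; thus $L\in\langle U_1,\dots,U_n\rangle\cap\langle W_1,\dots,W_k\rangle\cap\mathcal{K}(X)$. Since $\langle W_1,\dots,W_k\rangle$ was an arbitrary basic $\tau_V$-neighbourhood of $K$, this shows $K\in Cl(\langle U_1,\dots,U_n\rangle)$.

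The routine part is the verification of membership in the Vietoris basic sets; the one point needing care is the choice of $L$ in the second step, where the finite approximant must simultaneously meet every $U_i$ and every $W_l$ while remaining inside $\bigcup_i U_i$ and $\bigcup_l W_l$ — this is precisely why both families of points, the $a_i$ obtained from hitting points of the $\overline{U}_i$ and the $b_l$ obtained from hitting points of the $W_l$, are needed. Lemma \ref{l11111} plays no role in this particular argument.
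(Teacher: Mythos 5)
Your proof is correct, and it is essentially the classical argument of Michael that the paper simply cites for this lemma (the key device of replacing $K$ by a finite set that simultaneously hits every $U_i$ and every $W_l$ while staying inside both unions is exactly the standard one). Both inclusions are verified cleanly, including the observation that $\langle \overline{U}_1,\dots,\overline{U}_n\rangle$ is closed in $(\mathcal{K}(X),\tau_V)$ because its complement is covered by the open sets $\langle X, X\setminus\bigcup_i\overline{U}_i\rangle$ and $\langle X\setminus\overline{U}_j\rangle$, so nothing further is needed.
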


We will need the following lemma,  the proof of which we include for
the sake of the completeness.

\begin{lemma}\label{l1}
Let $K$ be a compact subset of a topological space $X$ and $\{U_i:
i\leq k\}$ be an open cover of $K$. Then, for each $i\leq k$, there
exists a compact subset $K_i$ of $X$ such that $K_i\subset U_i$,
$K=\bigcup_{i\leq k}K_i$ and $K_i\neq \emptyset$ whenever $U_i\cap
K\neq \emptyset$.
\end{lemma}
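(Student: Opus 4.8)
The plan is to prove this as a standard shrinking-type lemma for compact sets, using finite subcovers and elementary closedness/compactness facts. First I would handle the trivial reductions: if $U_i \cap K = \emptyset$ for some $i$, we will simply take $K_i = \emptyset$, so it suffices to produce the $K_i$ for those indices with $U_i \cap K \neq \emptyset$, discarding the others from the cover; thus without loss of generality assume $U_i \cap K \neq \emptyset$ for every $i \leq k$, and we must now additionally guarantee each $K_i$ is nonempty. The natural approach is to shrink the open cover $\{U_i : i \leq k\}$ of $K$ to a closed (in $K$) cover whose pieces still sit inside the corresponding $U_i$, and then set $K_i$ to be those closed pieces (which are compact, being closed subsets of the compact set $K$).

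The key step is the shrinking itself. Since $X$ is regular (hence so is the subspace $K$, which is moreover compact and therefore normal), I would apply the standard shrinking lemma for finite open covers of a normal space: the open cover $\{U_i \cap K : i \leq k\}$ of $K$ admits an open (in $K$) refinement $\{W_i : i \leq k\}$ with $\mathrm{cl}_K(W_i) \subseteq U_i \cap K$ and $\bigcup_{i \leq k} W_i = K$. Alternatively, and perhaps more self-containedly, for each point $x \in K$ pick an index $i(x)$ with $x \in U_{i(x)}$ and, by regularity, an open set $V_x$ with $x \in V_x \subseteq \overline{V_x} \subseteq U_{i(x)}$; extract a finite subcover $V_{x_1}, \dots, V_{x_m}$ of $K$ and put $K_i = \bigcup\{ \overline{V_{x_j}} \cap K : i(x_j) = i\}$. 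Each $K_i$ is a finite union of closed subsets of $K$, hence compact; $K_i \subseteq U_i$ by construction; and $\bigcup_{i \leq k} K_i \supseteq \bigcup_j (V_{x_j} \cap K) = K$, while the reverse inclusion is clear, so $K = \bigcup_{i \leq k} K_i$.

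The only remaining point is nonemptiness of $K_i$ when $U_i \cap K \neq \emptyset$, and here the construction above may fail (no $x_j$ need have $i(x_j) = i$). To fix this, after the above I would pick, for each $i$ with $U_i \cap K \neq \emptyset$ such that the provisional $K_i$ is empty, a single point $p_i \in U_i \cap K$ and enlarge $K_i$ to $K_i \cup \{p_i\}$; this preserves compactness and the inclusion $K_i \subseteq U_i$, still gives $\bigcup_{i\leq k} K_i = K$, and now $K_i \neq \emptyset$. I expect the main (and really the only) obstacle is bookkeeping: making sure the index reassignment and the nonemptiness patch do not disturb $K_i \subseteq U_i$ or the covering equality, but these are all immediate once the finite-subcover skeleton is in place. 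Regularity of $X$ is used exactly once, to obtain the sets $V_x$ with $\overline{V_x} \subseteq U_{i(x)}$.
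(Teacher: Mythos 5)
Your proposal is correct and follows essentially the same route as the paper's proof: use regularity to choose, for each $x\in K$, an open $V_x$ with $\overline{V_x}$ inside some $U_i$, pass to a finite subcover, and let $K_i$ be the (compact) union of the relevant $\overline{V_{x_j}}\cap K$. The only difference is in securing nonemptiness of $K_i$ when $U_i\cap K\neq\emptyset$ --- you patch afterwards by adjoining a single point of $U_i\cap K$, while the paper arranges the finite subcover so that each such $U_i$ already contains some $V_{x_j}$ --- and both devices work.
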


\begin{proof}
For each $x\in K$, pick an open subset $V_x$ such that $x\in
V_x\subset \overline{V}_x\subset U_i$ for some $i\leq k$. Since $K$
is compact and $\{V_x: x\in K\}$ is an open cover of $K$, there is a
finite subcover $\{V_{x_j}: j\leq m\}$ of $K$ such that for each
$i\leq k$ we have $V_{x_j}\subset U_i$ for some $j\leq m$, where
$m\in\mathbb{N}$. For each $i\leq k$, let $$K_i=K\cap
\bigcup\{\overline{V}_{x_j}: \overline{V}_{x_j}\subset U_i\};$$ then
$K_i$ is compact and $K_i\subset U_i$. Clearly, we have
$K=\bigcup\{K_i: i\leq k\}$. Moreover, it is obvious that $K_i\neq
\emptyset$ whenever $U_i\cap K\neq \emptyset$.
\end{proof}

\begin{lemma}\label{l2}
Let $D$ be an infinite countable discrete space. Then
$(\mathcal{F}_2(D), \tau_F)$ is not a $D_1$-space.
\end{lemma}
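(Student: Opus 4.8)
The goal is to show that if $D$ is a countably infinite discrete space, then $(\mathcal{F}_2(D),\tau_F)$ fails to be a $D_1$-space; by definition we must exhibit a closed subset $\mathcal{A}$ of $(\mathcal{F}_2(D),\tau_F)$ that has no countable local base. Write $D=\{d_n:n\in\omega\}$. Since $D$ is discrete, every subset of $D$ is closed, so $\mathcal{F}_2(D)$ consists of all singletons $\{d_n\}$ and all doubletons $\{d_n,d_m\}$. The natural candidate for the bad closed set is a singleton $\mathcal{A}=\{F_0\}$ for a suitable $F_0\in\mathcal{F}_2(D)$, or else a cleverly chosen infinite closed subset; I would first try $\mathcal{A}=\{\{d_0\}\}$ and analyze its neighborhood filter. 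The point is that in the Fell topology the "small" neighborhoods of $\{d_0\}$ are forced to be of the form $U^-\cap (K^c)^+$ where $d_0\in U$ and $K$ is a compact (hence \emph{finite}, since $D$ is discrete) subset of $D$ avoiding $d_0$; a basic Fell-neighborhood of $\{d_0\}$ thus looks like $\{F\in\mathcal{F}_2(D): F\cap U\neq\emptyset,\ F\cap K=\emptyset\}$ with $K$ finite.

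First I would record precisely what a basic neighborhood of $F_0=\{d_0\}$ in $(\mathcal{F}_2(D),\tau_F)$ is: intersecting finitely many subbasic sets $U_i^-$ and $(K_j^c)^+$, and using that $D$ is discrete (so compact $=$ finite) and $T_1$, one checks that every Fell-neighborhood of $\{d_0\}$ contains one of the form
\[
W(K)=\{F\in\mathcal{F}_2(D): d_0\in F,\ F\cap K=\emptyset\}
\]
for some finite $K\subset D\setminus\{d_0\}$ — shrinking $U$ to $\{d_0\}$ only helps, and several $(K_j^c)^+$ combine into one by taking $K=\bigcup K_j$. So $\{W(K):K\subset D\setminus\{d_0\}\text{ finite}\}$ is a neighborhood base at $\{d_0\}$, directed downward by reverse inclusion of the $K$'s.

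Next comes the diagonal argument. Suppose, for contradiction, that $\{\mathcal{O}_n:n\in\omega\}$ is a countable local base at $\{d_0\}$. For each $n$ choose a finite $K_n\subset D\setminus\{d_0\}$ with $W(K_n)\subset\mathcal{O}_n$. Now pick $d_{k(n)}\in D\setminus(\{d_0\}\cup K_n)$ for each $n$ — possible because $D$ is infinite and each $K_n$ is finite — and set $K=\{d_{k(n)}:n\in\omega\}$... but wait, this $K$ may be infinite, hence not compact in $D$, so $W(K)$ need not be open. The correct move is instead: the element $\{d_0,d_{k(n)}\}$ lies in $W(K_n)\subset\mathcal{O}_n$ for every $n$, so the sequence of doubletons $\{d_0,d_{k(n)}\}$ meets every $\mathcal{O}_n$; this forces it to converge to $\{d_0\}$ in the Fell topology — yet one directly checks (using a single subbasic Fell-open set of the form $(\{d_{k(n)}\}^c)^+$ eventually, or rather $(\{d_{k(m)}: m\le N\}^c)^+$ for each fixed $N$, combined with a careful choice making the $d_{k(n)}$ distinct) that this sequence does \emph{not} converge to $\{d_0\}$, or more cleanly that $\{d_0\}$ is \emph{not} in the Fell-closure of a well-chosen subsequence, giving the contradiction. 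Concretely, after passing to a subsequence we may assume the $d_{k(n)}$ are pairwise distinct; then $S=\{d_{k(n)}:n\in\omega\}$ is closed and discrete in $D$, so $S^c$ is open and $(S^c)^+ = \{F: F\subset S^c\}$ is a Fell-open neighborhood of... no, $\{d_0\}\subset S^c$ so $(S^c)^+$ \emph{is} a Fell-open neighborhood of $\{d_0\}$ that contains \emph{none} of the $\{d_0,d_{k(n)}\}$. Hence $(S^c)^+$ is a neighborhood of $\{d_0\}$ containing no $\mathcal{O}_n$ (each $\mathcal{O}_n$ contains $\{d_0,d_{k(n)}\}\notin(S^c)^+$), contradicting that $\{\mathcal{O}_n\}$ is a local base.

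\textbf{The main obstacle.} The delicate point is the bookkeeping in the diagonalization: one must choose the points $d_{k(n)}$ so that, on the one hand, $\{d_0,d_{k(n)}\}\in W(K_n)\subset\mathcal{O}_n$ (which only needs $d_{k(n)}\notin K_n$), and on the other hand the resulting set $S=\{d_{k(n)}:n\in\omega\}$ is infinite — equivalently, the $d_{k(n)}$ are chosen to be pairwise distinct, which is where the infinitude of $D$ is genuinely used and where one must argue the choices can be made consistently (a straightforward recursion, избегая $K_n\cup\{d_0,d_{k(0)},\dots,d_{k(n-1)}\}$ at stage $n$). Once $S$ is infinite and discrete-closed in $D$, the complement $(S^c)^+$ furnishes the required "small" Fell-neighborhood of $\{d_0\}$ that defeats the putative countable base, and the proof is complete. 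I expect no subtlety beyond this; everything else is unwinding the definitions of the Fell subbase on the concrete space $\mathcal{F}_2(D)$.
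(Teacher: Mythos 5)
There is a fatal error at the last step: $(S^{c})^{+}$ is \emph{not} Fell-open when $S$ is infinite. In the Fell topology the only $+$-type subbasic open sets are $(K^{c})^{+}$ with $K$ \emph{compact}, and a compact subset of the discrete space $D$ is finite; your $S=\{d_{k(n)}:n\in\omega\}$ is infinite, so the fact that $S^{c}$ is open in $D$ is irrelevant (that would give a Vietoris-open set, not a Fell-open one). You noticed exactly this obstruction a few lines earlier (``this $K$ may be infinite, hence not compact'') and then committed it anyway. Indeed $(S^{c})^{+}$ is not even a Fell-neighborhood of $\{d_{0}\}$: by your own reduction every basic Fell-neighborhood of $\{d_{0}\}$ contains some $W(K)$ with $K$ finite, hence contains $\{d_{0},s\}$ for every $s\in S\setminus K\neq\emptyset$, and no such doubleton lies in $(S^{c})^{+}$.

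Worse, the overall strategy of refuting first countability at the single point $\{d_{0}\}$ cannot be repaired: the family $\{W(K):K\subset D\setminus\{d_{0}\}\ \text{finite}\}$, which you correctly identify as a local base at $\{d_{0}\}$, is \emph{countable} because $D$ is countable, so $\{d_{0}\}$ does have a countable local base (and every doubleton is isolated, since $\{a\}^{-}\cap\{b\}^{-}\cap\mathcal{F}_{2}(D)=\{\{a,b\}\}$; thus $(\mathcal{F}_{2}(D),\tau_{F})$ is actually first countable). The failure of the $D_{1}$-property must therefore be witnessed by an \emph{infinite} closed subset of the hyperspace. The paper uses $\mathcal{F}_{1}(D)=\{\{d_{i}\}:i\in\mathbb{N}\}$: each $\{d_{i}\}$ is non-isolated (any basic Fell-neighborhood of it contains $\{d_{i},d\}$ for all $d$ outside a finite set), yet $\mathcal{F}_{1}(D)$ is closed and discrete (the neighborhood $\{x\}^{-}$ of $A$, for $x\in A$, meets at most one singleton), contradicting Fact 1, which asserts that the set of non-isolated points of a $D_{1}$-space is countably compact. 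One could instead argue directly that the closed set $\mathcal{F}_{1}(D)$ has no countable outer base by diagonalizing over the finite compact sets attached to each $d_{i}$, but some argument involving infinitely many base points is unavoidable; a single point will not do.
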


\begin{proof}
Enumerate $D$ as $\{d_i: i\in \mathbb{N}\}$. In order to prove this
result, we first give the following two Claims.

\smallskip
{\bf Claim 1:} Each $\{d_i\}$ is a non-isolated point in
$(\mathcal{F}_2(D), \tau_F)$.

\smallskip
Suppose not, then there exists $i\in\mathbb{N}$ such that
$\{\{d_i\}\}$ is open in $(\mathcal{F}_2(D), \tau_F)$, hence we can
pick finitely many open subsets $\{U_i: i\leq n\}$ and a compact $K$
of $D$ such that $\bigcap \{U_i^{-}: i\leq n\}\cap (K^c)^{+}\subset
\{\{d_i\}\}$. Clearly, $d_{i}\not\in K$ and $K\cup\{d_{i}\}\neq D$.
Pick any $d\in D\setminus (K\cup\{d_{i}\})$. It easily see that
$\{d, d_i\}\in\bigcap \{U_i^{-}: i\leq n\}\cap (K^c)^{+}$, thus
$\{d, d_i\}\in\{\{d_i\}\}$, that is, $\{d, d_i\}=\{d_i\}$, this is a
contradiction. Hence each $\{d_i\}$ is a non-isolated point in
$(\mathcal{F}_2(D), \tau_F)$.

\smallskip
{\bf Claim 2:}  $\{\{d_i\}: i\in \mathbb{N}\}$ has no cluster point
in $(\mathcal{F}_2(D), \tau_F)$.

\smallskip
Indeed, suppose that $A$ is a cluster point of $\{\{d_i\}: i\in
\mathbb{N}\}$. Pick any $x\in A$; then $\{x\}^-\cap
\mathcal{F}_2(D)$ is a neighborhood of $A$ in $(\mathcal{F}_2(D),
\tau_F)$, but $\{x\}^-$ meets at most one $\{d_i\}$ in
$\mathcal{F}_2(D)$. Thus $A$ is not a cluster point of $\{\{d_i\}:
i\in \mathbb{N}\}$, which is a contradiction.

Now we assume that $(\mathcal{F}_2(D), \tau_F)$ is a $D_1$-space. By
Fact 1, $NI(\mathcal{F}_2(D), \tau_F)$ is countably compact;
however, by Claims 1 and 2, this is impossible since $\{\{d_i\}:
i\in \mathbb{N}\}$ has no cluster point in $(\mathcal{F}_2(D),
\tau_F)$.
\end{proof}

\begin{lemma}\label{l4}
If $(\mathcal{F}_2(X), \tau_F)$ or $(\mathcal{F}_2(X), \tau_V)$ is
perfect\footnote{A space $X$ is called {\it perfect} if every closed
subset of $X$ is a $G_\delta$-set.}, then $X$ has a
$G_\delta$-diagonal\footnote{A space $X$ is said to have a {\it
$G_{\delta}$-diagonal} if, there is a sequence $\{\mathscr{U}_{n}\}$
of open covers of $X$, such that, for each $x\in X$,
$\{x\}=\bigcap_{n\in\mathbb{N}}\mbox{st}(x, \mathscr{U}_{n})$.}.
\end{lemma}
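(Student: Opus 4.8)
The plan is to realize the diagonal $\Delta_X=\{(x,x):x\in X\}$ as a $G_\delta$-subset of $X\times X$; by the well-known characterization of spaces with a $G_\delta$-diagonal (see \cite{G1984}) this is equivalent to the property stated in the footnote. The bridge between $X\times X$ and the hyperspace will be the natural ``unordered pair'' map
$$\phi\colon X\times X\to \mathcal F_2(X),\qquad \phi(x,y)=\{x,y\},$$
which is well defined since $\{x,y\}$ is a nonempty closed set of cardinality at most two, and which satisfies $\phi^{-1}(\mathcal F_1(X))=\{(x,y):x=y\}=\Delta_X$.

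The first step will be to verify that $\phi$ is continuous for both the Fell and the Vietoris topology on $\mathcal F_2(X)$. This reduces to a short computation on subbasic sets: for open $U\subseteq X$ one has $\phi^{-1}(U^{-})=(U\times X)\cup(X\times U)$ and $\phi^{-1}(U^{+})=U\times U$, and for compact $K\subseteq X$ (which is closed, as $X$ is Hausdorff) one has $\phi^{-1}((K^{c})^{+})=(X\setminus K)\times(X\setminus K)$; since $\{U^{-},U^{+}:U\ \text{open}\}$ is a subbase for $\tau_V$ and $\{U^{-}:U\ \text{open}\}\cup\{(K^{c})^{+}:K\ \text{compact}\}$ is a subbase for $\tau_F$, and all these preimages are open in $X\times X$, continuity follows in either case. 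The second step will be to note that $\mathcal F_1(X)$ is a closed subset of $\mathcal F_2(X)$ in both topologies: given $\{x,y\}$ with $x\ne y$, regularity yields disjoint open sets $U\ni x$ and $V\ni y$, and $U^{-}\cap V^{-}$ is then a neighbourhood of $\{x,y\}$ in $\mathcal F_2(X)$ all of whose members meet the disjoint sets $U$ and $V$ and hence have at least two points, so $\mathcal F_2(X)\setminus\mathcal F_1(X)$ is open.

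With these two observations the argument finishes quickly. Assume $(\mathcal F_2(X),\tau_F)$ (the case of $\tau_V$ is identical) is perfect; then the closed set $\mathcal F_1(X)$ is a $G_\delta$, say $\mathcal F_1(X)=\bigcap_{n\in\mathbb N}W_n$ with each $W_n$ open in $\mathcal F_2(X)$. By continuity of $\phi$, $\Delta_X=\phi^{-1}(\mathcal F_1(X))=\bigcap_{n\in\mathbb N}\phi^{-1}(W_n)$ exhibits $\Delta_X$ as a $G_\delta$ in $X\times X$, so $X$ has a $G_\delta$-diagonal. (Alternatively, the sequence of open covers can be written down directly: for each $n$ and $x$ pick open $V_x^{n}\ni x$ with $\phi(V_x^{n}\times V_x^{n})\subseteq W_n$ and set $\mathscr U_n=\{V_x^{n}:x\in X\}$; if $y\ne x$ then $\{x,y\}\notin W_m$ for some $m$, whence no member of $\mathscr U_m$ contains both $x$ and $y$, i.e. $y\notin\mbox{st}(x,\mathscr U_m)$, so $\bigcap_n\mbox{st}(x,\mathscr U_n)=\{x\}$.)

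I expect the only points requiring any care to be the routine check that $\phi$ is continuous for each of the two hyperspace topologies and the remark that $\mathcal F_1(X)$ is closed in $\mathcal F_2(X)$ (so that the perfectness hypothesis applies to it); everything else is formal.
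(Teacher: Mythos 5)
Your proof is correct and is essentially identical to the paper's: both use the continuous map $(x,y)\mapsto\{x,y\}$ from $X^2$ to $\mathcal{F}_2(X)$, pull back a $G_\delta$ representation of the closed set $\mathcal{F}_1(X)$ to exhibit the diagonal as a $G_\delta$ in $X^2$, and check continuity on the same subbasic sets. The only differences are that you spell out details the paper leaves implicit (closedness of $\mathcal{F}_1(X)$ and the explicit construction of the covers), which is fine.
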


\begin{proof}
We only prove the case for $(\mathcal{F}_2(X), \tau_F)$ since the
case of $(\mathcal{F}_2(X), \tau_V)$ can be shown by a similar way.
Clearly, it suffices to prove that $\Delta=\{(x, x): x\in X\}$ is
the intersection of countably many open subsets of $X^{2}$. Let $f:
X^2 \to (\mathcal{F}_2(X), \tau_F)$ defined by $f((a, b))=\{a, b\}$;
then $f$ is continuous. Indeed, for an open subset $U$ and compact
subset $K$ of $X$, we have
$$f^{-1}(U^-\cap \mathcal{F}_2(X))=(U\times X)\cup (X\times U)$$ and
$$f^{-1}((K^c)^+\cap \mathcal{F}_2(X))=(X\setminus K)\times
(X\setminus K),$$ which are open in $X^2$. Hence $f$ is continuous.
Since $(\mathcal{F}_2(X), \tau_F)$ is perfect and
$\mathcal{F}_1(X)=\{\{x\}: x\in X\}$ is a closed subset of $(F_2(X),
\tau_F)$, it follows that $\mathcal{F}_1(X)=\bigcap_{n\in
\mathbb{N}} \mathcal{G}_n$, where $\mathcal{G}_n$ is open in
$(\mathcal{F}_2(X), \tau_F)$ for each $n\in\mathbb{N}$. Clearly,
$\Delta=f^{-1}(\mathcal{F}_1(X))=\bigcap_{n\in \mathbb{N}}
f^{-1}(\mathcal{G}_n)$. Hence $X$ has a $G_\delta$-diagonal.
\end{proof}

Now we can prove one of main theorems in this section.

\begin{theorem}\label{t4}
The following statements are equivalent for a space $X$.
\begin{enumerate}
 \item $(CL(X), \tau_F)$ is a $D_1$-space;

\smallskip
 \item $(\mathcal{K}(X), \tau_F)$ is a $D_1$-space;

\smallskip
 \item $(\mathcal{F}_n(X), \tau_F)$ is a $D_1$-space for some $n\geq 2$;

\smallskip
 \item $X$ is compact metrizable.
\end{enumerate}
\end{theorem}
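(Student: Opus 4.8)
The plan is to route every implication through the auxiliary statement $(\ast)$: \emph{$(\mathcal{F}_2(X),\tau_F)$ is a $D_1$-space}. The scheme is $(4)\Rightarrow(1),(2),(3)$ directly; $(1)\Rightarrow(\ast)$, $(2)\Rightarrow(\ast)$, $(3)\Rightarrow(\ast)$ by passing to closed subspaces; and finally $(\ast)\Rightarrow(4)$, which carries the real content. For $(4)\Rightarrow(1),(2),(3)$: if $X$ is compact metrizable, then $X\setminus U$ is compact for every open $U\subseteq X$, so $U^{+}=((X\setminus U)^{c})^{+}$, and the Fell and Vietoris subbases on $CL(X)$ therefore generate the same topology; hence $\tau_F=\tau_V$, and by \cite[Theorem 3.1]{HPZ2003} (or because $(CL(X),\tau_V)$ is the Hausdorff-metric hyperspace of the compact metric space $X$) the space $(CL(X),\tau_F)$ is compact metrizable. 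A compact metrizable space is a $D_1$-space, since its closed subsets are compact and compact subsets of a metric space have a countable neighbourhood base; this yields $(1)$. Since $\mathcal{K}(X)=CL(X)$ for compact $X$ we get $(2)$, and since $\mathcal{F}_n(X)$ is closed in $(CL(X),\tau_F)$ (by fact \emph{(ii)} below) it is compact metrizable, giving $(3)$.

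Two facts drive the reductions to $(\ast)$. \emph{(i)} A closed subspace $Y$ of a $D_1$-space $Z$ is a $D_1$-space: if $C$ is closed in $Y$ it is closed in $Z$, hence has a countable neighbourhood base $\{U_n\}$ in $Z$, and then $\{U_n\cap Y\}$ is a countable neighbourhood base of $C$ in $Y$. \emph{(ii)} For each $m\geq1$, $\mathcal{F}_m(X)$ is closed in $(CL(X),\tau_F)$, in $(\mathcal{K}(X),\tau_F)$, and in $(\mathcal{F}_n(X),\tau_F)$ for $n\geq m$: if a closed $H$ has at least $m+1$ points, pick $m+1$ of them and pairwise disjoint open neighbourhoods $V_1,\dots,V_{m+1}$; then $\bigcap_{i\leq m+1}V_i^{-}$ is a Fell-open set containing $H$ and disjoint from $\mathcal{F}_m(X)$. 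Combining \emph{(i)} and \emph{(ii)}, each of $(1),(2),(3)$ implies $(\ast)$: for $(1)$ (resp. $(2)$) apply them with ambient space $(CL(X),\tau_F)$ (resp. $(\mathcal{K}(X),\tau_F)$) and $m=2$; for $(3)$ with $n\geq3$ use that $\mathcal{F}_2(X)$ is closed in $(\mathcal{F}_n(X),\tau_F)$, while for $n=2$ the statement $(3)$ is $(\ast)$ itself.

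For $(\ast)\Rightarrow(4)$, assume $(\mathcal{F}_2(X),\tau_F)$ is a $D_1$-space. Since $\mathcal{F}_1(X)$ is a closed copy of $X$ in it, fact \emph{(i)} shows $X$ is a $D_1$-space, hence first countable (and $T_1$, regular). I claim $X$ is countably compact. Otherwise $X$ contains a countably infinite closed discrete subspace $D$. For closed $Y\subseteq X$, intersecting the Fell subbasic sets $V^{-}$ and $(K^{c})^{+}$ with $CL(Y)=\{H\in CL(X):H\subseteq Y\}$ and using that $K\cap Y$ is compact shows that the subspace topology on $CL(Y)$ inherited from $(CL(X),\tau_F)$ is precisely the Fell topology of $Y$; moreover $CL(Y)$ is closed in $(CL(X),\tau_F)$, since for $H\not\subseteq Y$ the set $(X\setminus Y)^{-}$ is a Fell-neighbourhood of $H$ missing $CL(Y)$. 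Taking $Y=D$, the space $(\mathcal{F}_2(D),\tau_F)$ is thus a closed subspace of $(\mathcal{F}_2(X),\tau_F)$, hence a $D_1$-space by \emph{(i)}, contradicting Lemma~\ref{l2}. So $X$ is countably compact.

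Finally, $(\mathcal{F}_2(X),\tau_F)$ is $T_1$ (its points are finite subsets of the $T_1$ space $X$, separated using subbasic sets of the form $(\{x\}^{c})^{+}$ and $V^{-}$), and a $T_1$ space $Z$ that is a $D_1$-space is perfect: a closed $C\subseteq Z$ with countable neighbourhood base $\{\mathcal{U}_n\}$ satisfies $C=\bigcap_n\mathcal{U}_n$, because for $p\in Z\setminus C$ the open set $Z\setminus\{p\}$ contains $C$ and hence some $\mathcal{U}_n$, so $p\notin\mathcal{U}_n$. Thus $(\mathcal{F}_2(X),\tau_F)$ is perfect, and Lemma~\ref{l4} gives that $X$ has a $G_\delta$-diagonal. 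Now $X$ is countably compact with a $G_\delta$-diagonal, so by Chaber's theorem (every countably compact space with a $G_\delta$-diagonal is compact) $X$ is compact, and a compact Hausdorff space with a $G_\delta$-diagonal is metrizable (see \cite{G1984}); hence $X$ is compact metrizable, which is $(4)$, and all implications close. The step I expect to be the main obstacle is the middle one: verifying cleanly that the Fell topology restricts to the Fell topology on closed subspaces (so that the discrete space $D$ can be fed to Lemma~\ref{l2}), together with the remark that $D_1$ plus $T_1$ forces perfectness (so that Lemma~\ref{l4} applies); once $X$ is known to be countably compact with a $G_\delta$-diagonal, the rest is classical.
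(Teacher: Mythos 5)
Your proof is correct and follows essentially the same route as the paper: reduce everything to the statement that $(\mathcal{F}_2(X),\tau_F)$ is a $D_1$-space, use Lemma~\ref{l2} to rule out infinite closed discrete pieces, use Lemma~\ref{l4} to get a $G_\delta$-diagonal, and finish with \cite[Theorems 2.13, 2.14]{G1984}. The one organizational difference is in the compactness step: the paper first applies Fact 1 to get $NI(X)$ countably compact, concludes $NI(X)$ is compact, and then needs a separate covering argument (again via Lemma~\ref{l2}) to pass from compactness of $NI(X)$ to compactness of $X$; you instead apply Lemma~\ref{l2} directly to a hypothetical infinite closed discrete subspace of $X$ to get countable compactness of $X$ itself, and then invoke Chaber's theorem once --- a mild streamlining that bypasses Fact 1 and the covering argument. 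Your explicit verifications that the Fell topology restricts to the Fell topology on $CL(Y)$ for closed $Y$, and that $D_1$ plus $T_1$ implies perfect, are details the paper uses silently, and both check out.
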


\begin{proof}
Note that a $D_1$-space is a property closed under taking closed
subspaces, (1) $\Rightarrow$ (3) and (2) $\Rightarrow$ (3) are
trivial. Note that if $X$ is compact, then $(\mathcal{K}(X),
\tau_F)=(CL(X), \tau_F)\cong (CL(X), \tau_V)=(\mathcal{K}(X),
\tau_V)$, the implications (4) $\Rightarrow$ (1) and (4)
$\Rightarrow$ (2) follow from \cite{M1951}. We only need to prove
(3) $\Rightarrow$ (4).

It suffices to consider the case when $n=2$. Assume that
$(\mathcal{F}_2(X), \tau_F)$ is a $D_1$-space. Then $X$ is a
$D_1$-space. By Fact 1 and Lemma ~\ref{l4}, $NI(X)$ is a countably
compact subspace with a $G_{\delta}$-diagonal. Then $NI(X)$ is
metrizable \cite[Theorem 2.14]{G1984}, hence it is compact. Now we prove that $X$
is compact. Indeed, let $\mathcal{U}$ be an any open cover of $X$;
then we can find a finite subfamily $\mathcal{U}'\subset
\mathcal{U}$ such that $NI(X)\subset \bigcup\mathcal{U}'$. Let
$D=X\setminus \bigcup\mathcal{U}'$; then $D\subset I(X)$. We claim
that $|D|<\omega$. Otherwise, without loss of generality, $D$ is an
infinite countable closed discrete subset of $X$, then it follows
from Lemma ~\ref{l2} that $(\mathcal{F}_2(D), \tau_F)$ is not a
$D_1$-space, this is a contradiction. Therefore, $X$ is covered by
finitely many elements of $\mathcal{U}$, thus $X$ is compact.

By Lemma~\ref{l4}, $X$ has a $G_\delta$-diagonal, then $X$ is
metrizable by \cite[Theorem 2.13]{G1984}.
\end{proof}

\begin{remark}
However, if $X$ is a discrete space, then it is easily verified that
$(\mathcal{K}(X), \tau_V)$ is a discrete space, hence
$(\mathcal{F}_n(X), \tau_V)$ is discrete for any $n\in\mathbb{N}$,
thus all are $D_1$-spaces. Therefore, Theorem~\ref{t4} does not hold
for the case of the hyperspace with Vietoris topology. However, we
have the following theorem.
\end{remark}

In \cite{LL2021}, we have proved that for a space $X$ if $(CL(X),
\tau_V)$ is a $D_1$-space then $X$ is compact and metrizable.
Therefore, it is natural to characterize $X$ such that the subspace
$(\mathcal{K}(X), \tau_V)$ of $(CL(X), \tau_V)$ is a $D_1$-space,
see the following theorem.

\begin{theorem}\label{t6}
The following statements are equivalent for a space $X$.
\begin{enumerate}
\item $(\mathcal{K}(X), \tau_V)$ is a $D_1$-space;

\smallskip
\item $(\mathcal{F}_n(X), \tau_V)$ is a $D_1$-space for some $n\geq 2$;

\smallskip
\item $X$ is discrete or compact metrizable.
\end{enumerate}
\end{theorem}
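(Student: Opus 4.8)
The plan is to follow the same skeleton as Theorem~\ref{t4}, replacing the role of Lemma~\ref{l2} by its Vietoris analogue and inserting an extra case analysis to account for the discrete option. As in the previous theorem, the implications $(1)\Rightarrow(2)$ and $(2)\Rightarrow(2)$ (restriction to $\mathcal{F}_2$) are trivial since the $D_1$-property passes to closed subspaces, and $(3)\Rightarrow(1)$ is also easy: if $X$ is discrete then, as observed in the Remark, $(\mathcal{K}(X),\tau_V)$ is discrete, hence trivially a $D_1$-space; while if $X$ is compact metrizable then $(\mathcal{K}(X),\tau_V)=(CL(X),\tau_V)$ is metrizable by \cite{HPZ2003} (or \cite{M1951}), hence a $D_1$-space. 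So the entire content is $(2)\Rightarrow(3)$, and it suffices to treat $n=2$.

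First I would establish the Vietoris version of Lemma~\ref{l2}: if $D$ is an infinite countable discrete space, then $(\mathcal{F}_2(D),\tau_V)$ \emph{is} discrete (each singleton $\{\{d_i\}\}$ equals $\langle\{d_i\}\rangle$ and each doubleton $\{\{d_i,d_j\}\}$ equals $\langle\{d_i\},\{d_j\}\rangle$, both basic open), so this case is harmless --- the contradiction in the Vietoris setting must come from somewhere else. Next, assuming $(\mathcal{F}_2(X),\tau_V)$ is a $D_1$-space, I would note $X$ is a $D_1$-space (it embeds as the closed subspace $\mathcal{F}_1(X)$), so by Fact~1 $NI(X)$ is countably compact; and by Lemma~\ref{l4} (using that a $D_1$-space is perfect, since first-countable regular spaces with a countable base at every closed set are perfect --- more carefully, I should check that $(\mathcal{F}_2(X),\tau_V)$ being $D_1$ gives it the perfect property, or instead argue $G_\delta$-diagonal directly) $X$ has a $G_\delta$-diagonal. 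Combining, $NI(X)$ is countably compact with a $G_\delta$-diagonal, hence compact metrizable by \cite[Theorem 2.14]{G1984}.

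The dichotomy now enters: either $NI(X)=\emptyset$, in which case $X$ is discrete and we are done with case~(3a); or $NI(X)\neq\emptyset$, in which case I must show $X$ is compact metrizable. As in Theorem~\ref{t4}, cover the compact set $NI(X)$ by finitely many members of an arbitrary open cover $\mathcal{U}$, let $D=X\setminus\bigcup\mathcal{U}'\subseteq I(X)$, and show $|D|<\omega$; this forces $X$ compact, and then the $G_\delta$-diagonal plus compactness (via \cite[Theorem 2.13]{G1984}, or simply: compact $+$ $G_\delta$-diagonal $\Rightarrow$ metrizable) finishes the proof. The main obstacle is precisely the step $|D|<\omega$: the clean Vietoris argument used above (that $(\mathcal{F}_2(D),\tau_V)$ would fail to be $D_1$) is unavailable, since that space is discrete. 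Instead, when $NI(X)\neq\emptyset$, pick $p\in NI(X)$ and consider the subspace $\mathcal{F}_2(D\cup\{p\})$, which sits inside $\mathcal{F}_2(X)$ as a closed subspace and hence must be $D_1$; but then one can show that in $(\mathcal{F}_2(D\cup\{p\}),\tau_V)$ the point $p$ is non-isolated, each $\{p,d_i\}$ is non-isolated (any Vietoris-basic neighborhood of $\{p,d_i\}$ of the form $\langle U_1,\dots,U_k\rangle$ must have some $U_j$ containing $p$, and since $p$ is not isolated in $D\cup\{p\}$, $U_j$ contains infinitely many $d$'s, so the neighborhood contains other points of $\mathcal{F}_2(D\cup\{p\})$), yet the family $\{\{p,d_i\}:i\in\mathbb{N}\}$ has no cluster point (any $A\in\mathcal{F}_2(D\cup\{p\})$ contains at most one $d_i$, and an appropriate basic neighborhood isolates $A$ from all but finitely many terms) --- contradicting Fact~1 applied to $\mathcal{F}_2(D\cup\{p\})$. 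This yields $|D|<\omega$, completing $(2)\Rightarrow(3)$.
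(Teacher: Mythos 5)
Your overall architecture matches the paper's, and the easy implications, the use of Fact~1 and Lemma~\ref{l4} to get $NI(X)$ compact metrizable, and the observation that $(\mathcal{F}_2(D),\tau_V)$ is discrete for discrete $D$ are all fine. But the key step --- showing $|D|<\omega$ --- contains a genuine error. You pick $p\in NI(X)$ and work inside $\mathcal{F}_2(D\cup\{p\})$, asserting that ``$p$ is not isolated in $D\cup\{p\}$.'' This is false: $D=X\setminus\bigcup\mathcal{U}'$ is closed in $X$ and $p\in NI(X)\subset\bigcup\mathcal{U}'$, so $p\notin D$ and $X\setminus D$ is an open neighborhood of $p$ missing $D$ entirely. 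Hence $p$ is isolated in the subspace $D\cup\{p\}$, that subspace is discrete, and therefore $(\mathcal{F}_2(D\cup\{p\}),\tau_V)$ is discrete --- exactly the ``harmless'' situation you identified at the start. Every point of it is isolated, $NI(\mathcal{F}_2(D\cup\{p\}))=\emptyset$, Fact~1 holds vacuously, and no contradiction is produced. The difficulty you correctly diagnosed (that the Fell-topology Lemma~\ref{l2} has no Vietoris analogue for discrete $D$) reappears unresolved, because by passing to the closed subspace $D\cup\{p\}$ you have discarded the points of $X$ that witness non-isolatedness.

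The repair, which is what the paper does, is to keep the witnesses: since $X$ is non-discrete and first countable (being $D_1$), fix a non-trivial sequence $x_n\to x$ with $x\in NI(X)$, disjoint from $\{d_i\}$. Then in the \emph{ambient} space $(\mathcal{F}_2(X),\tau_V)$ one has $\{d_i,x_n\}\to\{d_i,x\}$, so each $\{d_i,x\}$ lies in $NI(\mathcal{F}_2(X),\tau_V)$; on the other hand the family $\{\{d_i,x\}:i\in\mathbb{N}\}$ is discrete in $NI(\mathcal{F}_2(X),\tau_V)$ (for a point $K$ of $NI(\mathcal{F}_2(X))$ at least one coordinate is non-isolated in $X$, so one can choose basic neighborhoods $\langle V_1,V_2\rangle$ or $\langle V\rangle$ meeting at most one $\{d_i,x\}$). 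This contradicts the countable compactness of $NI(\mathcal{F}_2(X),\tau_V)$ given by Fact~1. (Equivalently, you could run your subspace argument with $D\cup(\{x\}\cup\{x_n:n\in\mathbb{N}\})$ in place of $D\cup\{p\}$, which is closed and non-discrete.) Note also that the paper applies this argument directly to conclude $X$ is compact, without routing through an open cover of $NI(X)$; your cover-based reduction to a closed discrete $D\subset I(X)$ is fine, but the contradiction must be derived as above.
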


\begin{proof}
The implication of (1) $\Rightarrow$ (2) is trivial since a closed
subspace of a $D_1$-space is a $D_1$-space. It suffices to prove
that (3) $\Rightarrow$ (1) and (2) $\Rightarrow$ (3).

(3) $\Rightarrow$ (1). If $X$ is discrete, then it is easy to see
that $(\mathcal{K}(X), \tau_V)$ is discrete; if $X$ is compact
metrizable, then $(\mathcal{K}(X), \tau_V)$ is compact metrizable by
\cite{M1951}. Hence $(\mathcal{K}(X), \tau_V)$ is a $D_1$-space.

(2) $\Rightarrow$ (3). We only consider the case for $n=2$. Assume
$(\mathcal{F}_2(X), \tau_V)$ is a $D_1$-space. Then $X$ is a
$D_1$-space. By fact 1, $NI(X)$ is countably compact, then it
follows from Lemma~\ref{l4} that $X$ is metrizable. Suppose $X$ is
neither discrete nor compact, there exist a closed infinite
countable discrete subset $\{d_i: i\in \mathbb{N}\}\subset I(X)$ and
a non-trivial sequence $\{x_n\in \mathbb{N}\}$ of $X$ converging to
$x\in NI(X)$. Without loss of generality, we may assume that $\{d_i:
i\in \mathbb{N}\}\cap\{x_n: n\in \mathbb{N}\}=\emptyset$, $d_n\neq
d_{m}$ and $x_n\neq x_{m}$ for any $n\neq m$. Clearly, for each
$i\in \mathbb{N}$, the set $\{d_i, x\}\in NI(\mathcal{F}_2(X),
\tau_V)$ since $\{d_i, x_n\}\to \{d_i, x\}$ in $(\mathcal{F}_2(X),
\tau_V)$ as $n\to \infty$. In order to obtain a contradiction, we
prove the following Claim 3. Indeed, since $(\mathcal{F}_2(X),
\tau_V)$ is a $D_1$-space, it follows that $NI(\mathcal{F}_2(X),
\tau_V)$ is countably compact; however, the set $\{\{d_i, x\}: i\in
\mathbb{N}\}$ is discrete in $NI(\mathcal{F}_2(X), \tau_V)$.

\smallskip
{\bf Claim 3} The set $\{\{d_i, x\}: i\in \mathbb{N}\}$ is discrete
in $NI(\mathcal{F}_2(X), \tau_V)$.

\smallskip
Take any $K\in NI(\mathcal{F}_2(X), \tau_V)$. If $|K|=2$, we write
$K=\{a_1, a_2\}$, and let $V_j$ be a neighborhood of $a_j$ for each
$j\leq 2$ such that $|(V_1\cup V_{2})\cap \{d_i: i\in
\mathbb{N}\}|\leq 1$ and $V_1\cap V_2=\emptyset$ (this is possible
since $K\in NI(\mathcal{F}_2(X), \tau_V)$ which implies that at
least one of the points $\{a_{1}, a_{2}\}$ is not a non-isolated
point in $X$). Then it is easily verified that $|\langle V_1,
V_2\rangle\cap \{\{d_i, x\}: i\in \mathbb{N}\}|\leq 1$. If $|K|=1$,
then let $K=\{a\}$ and let $V$ be an open neighborhood of $a$ in $X$
with $|V\cap \{d_i: i\in \mathbb{N}\}|\leq 1$. Then $\langle
V\rangle$ is a neighborhood of $K$ and $|\langle V\rangle \cap
\{\{d_i, x\}: i\in \mathbb{N}\}|\leq 1$. Therefore, $\{\{d_i, x\}:
i\in \mathbb{N}\}$ is discrete in $NI(\mathcal{F}_2(X), \tau_V)$.
\end{proof}

However, for the cases of $(\mathcal{F}(X), \tau_V)$ and
$(\mathcal{F}(X), \tau_F)$, the situations are different, see the
following two theorems.

\begin{theorem}\label{t9}
Let $X$ be a space. Then $(\mathcal{F}(X), \tau_V)$ is a $D_1$-space
if and only if $X$ is discrete.
\end{theorem}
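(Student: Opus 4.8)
The forward direction is the trivial one: if $X$ is discrete, then $\mathcal{F}(X)=\mathcal{K}(X)$ and, as noted in the Remark after Theorem~\ref{t4}, $(\mathcal{K}(X),\tau_V)$ is discrete, hence a $D_1$-space. The substance is the converse. Assume $(\mathcal{F}(X),\tau_V)$ is a $D_1$-space. Since $\mathcal{F}_2(X)$ is a closed subspace of $\mathcal{F}(X)$ in the Vietoris topology, $(\mathcal{F}_2(X),\tau_V)$ is also a $D_1$-space, so by Theorem~\ref{t6} (the implication (2)$\Rightarrow$(3)) the space $X$ is discrete or compact metrizable. It therefore suffices to rule out the case that $X$ is compact metrizable and infinite.

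So suppose $X$ is infinite compact metrizable; I will derive a contradiction with Fact~1, namely that $NI(\mathcal{F}(X),\tau_V)$ must be countably compact. The idea is to exhibit inside $\mathcal{F}(X)$ a closed (in $\mathcal{F}(X)$) countably infinite discrete set all of whose points are non-isolated in $(\mathcal{F}(X),\tau_V)$. Fix a nontrivial convergent sequence $x_n\to x$ in $X$ with all terms distinct and distinct from $x$. For each $n$ set $A_n=\{x_1,\dots,x_n,x\}\in\mathcal{F}_{n+1}(X)$. First, each $A_n$ is non-isolated in $(\mathcal{F}(X),\tau_V)$: the sets $A_n\cup\{x_m\}$ for $m>n$ converge to $A_n$ in the Vietoris topology (given a basic neighborhood $\langle U_1,\dots,U_k\rangle$ of $A_n$, one of the $U_i$ contains $x$, hence contains $x_m$ for large $m$, while the finitely many other points of $A_n$ stay put and $A_n\cup\{x_m\}\subseteq\bigcup U_i$), and these approximating sets are distinct from $A_n$. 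Second, $\{A_n:n\in\mathbb{N}\}$ is closed and discrete in $(\mathcal{F}(X),\tau_V)$: given any $F\in\mathcal{F}(X)$, I want a Vietoris neighborhood of $F$ meeting $\{A_n\}$ in at most one point. The natural separating device is cardinality together with containment of $x$ — if $x\notin F$, take a neighborhood of $F$ inside $(X\setminus\{x\})$-type open sets (using regularity, an open $U\supseteq F$ with $x\notin\overline U$, so $\langle U\rangle$ misses every $A_n$); if $x\in F$, then $F$ has $|F|=p$ points for some $p$ (it is finite) and one separates $F$ from all but finitely many $A_n$ by shrinking around the finitely many points of $F$, then handles the finitely many remaining $A_n$ directly using that distinct $A_n$'s differ. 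This contradicts the countable compactness of $NI(\mathcal{F}(X),\tau_V)$ forced by Fact~1.

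The one genuine subtlety is the discreteness/closedness of $\{A_n\}$ at a limit point $F$ of the $A_n$'s in $X$ — but the key observation is that the $A_n$ are increasing with $|A_n|\to\infty$, whereas every member of $\mathcal{F}(X)$ is finite, so any candidate cluster point $F$ has a fixed finite cardinality $p$ and cannot "see" more than finitely many $A_n$: more precisely, for $n>p$ the set $A_n$ has a point outside any union of $p$ small disjoint balls around the points of $F$, so a suitable $\langle V_1,\dots,V_p\rangle$ around $F$ (with the $V_j$ disjoint and small) already excludes all $A_n$ with $n>p$. Thus the main obstacle is not deep; it is just organizing this cardinality-plus-limit-point bookkeeping cleanly. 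I expect the write-up to parallel Claim~3 in the proof of Theorem~\ref{t6}, with "$|F|$ fixed and finite while $|A_n|\to\infty$" playing the role that "$|K|\le 2$ while the $d_i$ are spread out" played there.
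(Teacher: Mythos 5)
Your proposal is correct and follows essentially the same route as the paper: reduce via Theorem~\ref{t6} to the case of an infinite compact metrizable $X$, take the increasing finite sets $A_n=\{x,x_1,\dots,x_n\}$ along a nontrivial convergent sequence, check they are non-isolated, and use the cardinality argument ($|F|$ fixed while $|A_n|\to\infty$) to see that $\{A_n\}$ is an infinite discrete set with no cluster point, contradicting the countable compactness of $NI(\mathcal{F}(X),\tau_V)$ from Fact~1. The only cosmetic difference is that the paper carries out the discreteness claim inside the closed subspace $\mathcal{F}(S)$ of the convergent sequence $S$ rather than in $\mathcal{F}(X)$ itself.
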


\begin{proof}
If $X$ is discrete, it is easy to see that $(\mathcal{F}(X),
\tau_V)$ is discrete, hence it is a $D_1$-space.

If $(\mathcal{F}(X), \tau_V)$ is a $D_1$-space, then
$(\mathcal{F}_2(X), \tau_V)$ is a $D_1$-space, by Theorem ~\ref{t6},
$X$ is discrete or compact metrizable. If $X$ is compact metrizable
and non-discrete, then it contains a convergent sequence
$S=\{x\}\cup\{x_n: n\in \mathbb{N}\}$ with $x_n\to x$ as
$n\rightarrow\infty$, where $x$ is a non-isolated point in $X$. Let
$\mathcal{A}=\{\{x\}\cup\{x_i: i\leq n\}: n\in \mathbb{N}\}$. Then
it is obvious that $\mathcal{A}\subset NI(\mathcal{F}(S))$. We claim
that $\mathcal{A}$ is discrete in $\mathcal{F}(S)$.

Suppose not, $\mathcal{A}$ has a cluster point $B$ in
$\mathcal{F}(S)$. Let $B=\{y_1, ..., y_k\}$. If $x\notin B$, then
$\langle \{y_1\}, ..., \{y_k\}\rangle$ is an open neighborhood of
$B$, $|\langle \{y_1\}, ..., \{y_k\}\rangle\cap
\mathcal{A}|=\emptyset$, this is a contradiction. If $x\in B$, say
$y_1=x$, pick an open neighborhood $V=\{x\}\cup\{x_{n}: n>k+1\}$ of
$y_1$ in $S$. Clearly, $|S\setminus V|>k+1$. Then $\langle V,
\{y_2\}, ..., \{y_k\}\rangle$ is an open neighborhood of $B$. Since
$\langle V, \{y_2\}, ..., \{y_k\}\rangle\cap\{\{x\}\cup\{x_i: i\leq
n\}: n>k+1\}=\emptyset$. Indeed, if $\{x, x_1, ..., x_m\}\in \langle
V, \{y_2\}, ..., \{y_k\}\rangle\cap\{\{x\}\cup\{x_i: i\leq n\}:
n>k+1\}$ for some $m>k+1$, $\{x, x_1, ..., x_m\}\subset V\cap \{y_2,
... y_k\}$, it implies that $\{x_1, ..., x_{k+1}\}\subset \{y_2,
..., y_k\}$, which is impossible. It follows that $|\langle V,
\{y_2\}, ..., \{y_k\}\rangle\cap \mathcal{A}|<\omega$, which is a
contradiction.

Therefore, $\mathcal{A}$ is discrete in $\mathcal{F}(S)$. From fact
1, $\mathcal{F}(S)$ is not a $D_1$-space, which implies that
$(\mathcal{F}(X), \tau_V)$ is not a $D_1$-space, this is a
contradiction. Hence $X$ is not compact metrizable. Thus $X$ is
discrete.
\end{proof}

\begin{theorem}
Let $X$ be a space. Then $(\mathcal{F}(X), \tau_F)$ is a $D_1$-space
if and only if $X$ is finite.
\end{theorem}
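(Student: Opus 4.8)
The plan is to prove both directions, with the interesting one being that $(\mathcal{F}(X),\tau_F)$ being a $D_1$-space forces $X$ to be finite. The easy direction: if $X$ is finite, then $X$ is discrete and compact, so $\mathcal{F}(X)=\mathcal{F}_{|X|}(X)$ is itself finite, hence trivially a $D_1$-space. For the forward direction, suppose $(\mathcal{F}(X),\tau_F)$ is a $D_1$-space. Since $\mathcal{F}_2(X)$ is a closed subspace of $(\mathcal{F}(X),\tau_F)$ and the $D_1$-property passes to closed subspaces, $(\mathcal{F}_2(X),\tau_F)$ is a $D_1$-space, so by Theorem~\ref{t4} we get that $X$ is compact metrizable. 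It then remains to rule out the case that $X$ is infinite compact metrizable.

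So assume for contradiction that $X$ is infinite compact metrizable. Then $X$ contains a nontrivial convergent sequence $S=\{x\}\cup\{x_n: n\in\mathbb{N}\}$ with $x_n\to x$, and $S$ is a closed subset of $X$; moreover $\mathcal{F}(S)$ is (homeomorphic to) a closed subspace of $(\mathcal{F}(X),\tau_F)$ — here I would use that $S$ closed in $X$ gives $\mathcal{F}(S)$ closed in $(\mathcal{F}(X),\tau_F)$, analogously to the Vietoris case but now checking against the subbasic sets $U^-$ and $(K^c)^+$. Hence it suffices to show $(\mathcal{F}(S),\tau_F)$ is not a $D_1$-space. Following the template of Theorem~\ref{t9}, set $\mathcal{A}=\{\{x\}\cup\{x_i: i\leq n\}: n\in\mathbb{N}\}$. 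Each element of $\mathcal{A}$ is non-isolated in $(\mathcal{F}(S),\tau_F)$ because $\{x\}\cup\{x_i: i\leq n\}$ is approached by, say, $\{x\}\cup\{x_i: i\leq n\}\cup\{x_m\}$ as $m\to\infty$ (these lie in $\mathcal{F}(S)$ and converge in the Fell topology since $S$ is compact, so the $(K^c)^+$ constraints are eventually irrelevant and the $U^-$ constraints involve only finitely many coordinates). So $\mathcal{A}\subset NI(\mathcal{F}(S),\tau_F)$, and by Fact~1 it would be enough to show $\mathcal{A}$ has no cluster point in $(\mathcal{F}(S),\tau_F)$, contradicting countable compactness of $NI$.

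The main obstacle, and the place that needs genuine care, is showing $\mathcal{A}$ has no cluster point in the Fell topology — this is where the Fell topology differs meaningfully from the Vietoris topology, since $(K^c)^+$-type neighborhoods are much weaker. Suppose $B\in\mathcal{F}(S)$ were a cluster point of $\mathcal{A}$, say $B=\{y_1,\dots,y_k\}$. If $x\notin B$, then since $B$ is a finite set of isolated points of $S$ (each $y_j$ is some $x_{i_j}$), each singleton $\{y_j\}$ is open in $S$, so $\langle\{y_1\},\dots,\{y_k\}\rangle\cap\mathcal{F}(S)$ is Vietoris-open hence Fell-open (for finite discrete pieces the two agree), and it is a neighborhood of $B$ meeting $\mathcal{A}$ not at all, since every member of $\mathcal{A}$ contains $x\notin\{y_1,\dots,y_k\}$. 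If $x\in B$, say $y_1=x$, then I would take the neighborhood $V^-\cap (\{y_2\})^-\cap\cdots\cap(\{y_k\})^-\cap ((\{x_1,\dots,x_{k+1}\})^c)^+\cap\mathcal{F}(S)$ where $V=\{x\}\cup\{x_n: n> k+1\}$; this is a Fell-neighborhood of $B$ (the complement $\{x_1,\dots,x_{k+1}\}$ is a compact set missing $B$, and each $\{x_j\}^-$, $V^-$ is subbasic). A member $\{x\}\cup\{x_i: i\leq m\}$ of $\mathcal{A}$ lying in this set would have to avoid $x_1,\dots,x_{k+1}$, forcing $m<1$, impossible (or more simply $m\le 0$), so only finitely many members of $\mathcal{A}$ can meet it — in fact none with $m\ge 1$ — contradicting that $B$ is a cluster point. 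Thus $\mathcal{A}$ has no cluster point, $NI(\mathcal{F}(S),\tau_F)$ is not countably compact, $(\mathcal{F}(S),\tau_F)$ is not a $D_1$-space, and therefore neither is $(\mathcal{F}(X),\tau_F)$ — contradiction. Hence $X$ is finite.
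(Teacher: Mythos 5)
Your overall strategy is sound and the first half matches the paper exactly: pass to the closed subspace $(\mathcal{F}_2(X),\tau_F)$, apply Theorem~\ref{t4} to get $X$ compact metrizable, and then rule out the infinite case via a convergent sequence. However, there is a concrete error in the step where you show $\mathcal{A}$ has no cluster point, in the case $x\in B$. You assert that $V^-\cap(\{y_2\})^-\cap\cdots\cap(\{y_k\})^-\cap((\{x_1,\ldots,x_{k+1}\})^c)^+$ is a neighborhood of $B$ because ``$\{x_1,\ldots,x_{k+1}\}$ is a compact set missing $B$.'' That is false in general: nothing prevents $B$ from containing some $x_i$ with $i\le k+1$ (e.g.\ $B=\{x,x_1\}$), in which case $B\notin((\{x_1,\ldots,x_{k+1}\})^c)^+$ and your set is not a neighborhood of $B$ at all. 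The repair is easy: put $K=\{x_1,\ldots,x_{k+1}\}\setminus B$. Since $|B|=k$ and $x$ is not one of the $x_i$, at most $k-1$ of $x_1,\ldots,x_{k+1}$ lie in $B$, so $K\neq\emptyset$ and $K$ is a compact set disjoint from $B$; then $(K^c)^+$ is a genuine Fell-neighborhood of $B$, and any $\{x\}\cup\{x_i: i\le m\}$ with $m\ge k+1$ contains all of $K$ and is therefore excluded. Thus every $B$ has a neighborhood meeting $\mathcal{A}$ in only finitely many points, which (the space being Hausdorff) suffices to contradict the countable compactness of $NI(\mathcal{F}(S),\tau_F)$ guaranteed by Fact~1.

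With that fix your argument goes through, but the paper takes a shorter route that you may want to adopt: once $X$ is known to be compact, the Fell and Vietoris topologies on $CL(X)$ coincide (every closed subset of $X$ is compact, so the sets $U^+$ are already Fell-open), hence $(\mathcal{F}(X),\tau_F)=(\mathcal{F}(X),\tau_V)$ is a $D_1$-space and Theorem~\ref{t9} applies directly to force $X$ discrete, hence finite. You are in effect re-proving the hard half of Theorem~\ref{t9} inside the Fell topology of the compact space $S$, where the two topologies agree anyway; quoting that coincidence would let you delete the entire cluster-point analysis, including the step that currently fails.
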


\begin{proof}
If $X$ is finite, then it is easy to see that $(\mathcal{F}(X),
\tau_F)$ is a $D_1$-space.

If $(\mathcal{F}(X), \tau_F)$ is a $D_1$-space, then
$(\mathcal{F}_2(X), \tau_F)$ is a $D_1$-space, hence $X$ is compact
metrizable by Theorem ~\ref{t4}. From \cite[Exercise 5.1, problem
3]{B1993}, it follows that $$(CL(X), \tau_V)=(CL(X), \tau_F),$$ then
$(\mathcal{F}(X), \tau_V)=(\mathcal{F}(X), \tau_F)$ is a
$D_1$-space, which shows that $X$ is discrete by Theorem ~\ref{t9},
hence $X$ is finite since a discrete compact space is finite.
\end{proof}

As some applications of above results, we have the following remark.

\begin{remark}
Let $X$ be an arbitrary infinite compact metrizable space. Then
$(CL(X), \tau_V)$ is a compact metrizable space, thus $(CL(X),
\tau_V)$ is a $D_{1}$-space and $(\mathcal{F}(X), \tau_V)$ is
metrizable; however, $(\mathcal{F}(X), \tau_V)$ is not a
$D_{1}$-space by Theorem~\ref{t9}. Therefore, there exists a closed
subset $\mathbf{F}$ of $(\mathcal{F}(X), \tau_V)$ such that
$\mathbf{F}$ has no countable character at $(\mathcal{F}(X),
\tau_V)$. Moreover, $(\mathcal{K}(X), \tau_V)$ is a $D_1$-space;
however, $(\mathcal{F}(X), \tau_F)$ is not a $D_1$-space.
\end{remark}

Finally we consider the space $X$ such that the subspace
$(\mathcal{K}(X)\setminus \mathcal{F}(X), \tau_V)$ is a
$D_{1}$-space. First, we need two lemmas.

A subset $P$ of $X$ is called a \emph{sequential neighborhood} of $x
\in X$, if each sequence converging to $x$ is eventually\footnote{A
sequence $\{x\}\cup\{x_i: i\in \mathbb{N}\}$ with $x_i\to x$ is
called {\it eventually} in some subset $P$ if there exists
$k\in\mathbb{N}$ such that $\{x\}\cup\{x_i: i\geq k\}\subset P$.} in
$P$. A subset $U$ of $X$ is called \emph{sequentially open} if $U$
is a sequential neighborhood of each of its points. A subset $F$ of
$X$ is called \emph{sequentially closed} if $X\setminus F$ is
sequentially open. The space $X$ is called a \emph{sequential space}
if each sequentially open subset of $X$ is open.

\begin{lemma}\label{l111}
Let $X$ be a sequential space. Then $(\mathcal{F}(X), \tau_V)$ is
open in $(\mathcal{K}(X), \tau_V)$ if and only if $X$ is discrete.
\end{lemma}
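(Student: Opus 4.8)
I would prove the two implications separately, handling the substantive direction by contraposition.

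\smallskip
\noindent\emph{Easy direction.} If $X$ is discrete, then every compact subset of $X$ is finite, so $\mathcal{F}(X)=\mathcal{K}(X)$, which is trivially open in $(\mathcal{K}(X),\tau_V)$. Nothing more is needed here.

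\smallskip
\noindent\emph{Main direction (contrapositive).} Assuming $X$ is not discrete, I will exhibit a point of $\mathcal{F}(X)$ at which $\mathcal{F}(X)$ fails to be a neighborhood in $(\mathcal{K}(X),\tau_V)$. Since $X$ is not discrete it has a non-isolated point $x$, so $X\setminus\{x\}$ is not closed, hence (as $X$ is sequential) not sequentially closed; therefore some sequence in $X\setminus\{x\}$ converges to $x$. Because $X$ is $T_1$ and regular, hence Hausdorff, this sequence cannot take any single value $y\neq x$ infinitely often (a constant sequence at $y$ would then converge to both $y$ and $x$), so its set of values is infinite and, after passing to a subsequence, I obtain pairwise distinct points $\{x_n:n\in\mathbb{N}\}\subset X\setminus\{x\}$ with $x_n\to x$.

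Now I consider an arbitrary basic $\tau_V$-neighborhood of $\{x\}$ in $\mathcal{K}(X)$, necessarily of the form $\langle U_1,\dots,U_k\rangle\cap\mathcal{K}(X)$; since $\{x\}\cap U_j\neq\emptyset$ for each $j\le k$, we have $x\in U_j$ for all $j$. Put $U=\bigcap_{j\le k}U_j$, an open neighborhood of $x$. As $x_n\to x$, there is $N$ with $x_n\in U$ for all $n\ge N$. Then $K'=\{x\}\cup\{x_n:n\ge N\}$ is a convergent sequence together with its limit, hence compact, and $K'\subset U\subset\bigcup_{j\le k}U_j$ while $\{x\}\subset K'\cap U_j$ for every $j$; thus $K'\in\langle U_1,\dots,U_k\rangle\cap\mathcal{K}(X)$. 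But $K'$ is infinite, so $K'\in\mathcal{K}(X)\setminus\mathcal{F}(X)$. Hence every basic $\tau_V$-neighborhood of $\{x\}$ meets $\mathcal{K}(X)\setminus\mathcal{F}(X)$, so $\mathcal{F}(X)$ is not open in $(\mathcal{K}(X),\tau_V)$, which completes the contrapositive.

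\smallskip
The argument is essentially routine, and I do not expect a genuine obstacle. The only two points needing care are: (i) invoking sequentiality (together with the Hausdorff property of $X$) correctly to produce an honest non-trivial convergent sequence with pairwise distinct terms at the non-isolated point $x$; and (ii) verifying that the ``tail plus limit'' set $K'$ lands inside the prescribed basic Vietoris set, which comes down to the fact that a convergent sequence is eventually contained in any neighborhood of its limit.
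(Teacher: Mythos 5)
Your proof is correct and follows essentially the same route as the paper: use sequentiality (plus the Hausdorff property) to produce a non-trivial convergent sequence at a non-isolated point, and observe that any basic Vietoris neighborhood of the singleton $\{x\}$ must contain the infinite compact set consisting of a tail of that sequence together with its limit. You are somewhat more careful than the paper in justifying the existence of the sequence and in reducing a general basic neighborhood $\langle U_1,\dots,U_k\rangle$ to $\langle U\rangle$ with $U=\bigcap_{j\le k}U_j$, but the underlying argument is the same.
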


\begin{proof}
It suffices to prove the necessity. Assume that $(\mathcal{F}(X),
\tau_V)$ is open in $(\mathcal{K}(X), \tau_V)$, and assume that $X$
is not discrete. Then there exists a non-trivial sequence
$\{x_{n}\}$ converging to $x$ as $n\rightarrow\infty$. Since
$\{x\}\in\mathcal{F}(X)$, there exists an open neighborhood $U$ of
$x$ such that $\langle U\rangle\subset \mathcal{F}(X)$. However,
since $x\in U$ and $\{x_{n}\}$ converging to $x$ as
$n\rightarrow\infty$, we can find $m\in\mathbb{N}$ such that
$\{x\}\cup\{x_{n}: n\geq m\}\subset U$, then $\{x\}\cup\{x_{n}:
n\geq m\}\in\langle U\rangle\subset\mathcal{F}(X)$, which is a
contradiction since $\{x\}\cup\{x_{n}: n\geq m\}\not\in
\mathcal{F}(X)$.
\end{proof}

\begin{lemma}\label{l112}
Let $X$ be a space. Then any point of $(\mathcal{K}(X)\setminus
\mathcal{F}(X), \tau_V)$ is not isolated.
\end{lemma}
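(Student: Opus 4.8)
The plan is to show that an arbitrary element $K \in \mathcal{K}(X) \setminus \mathcal{F}(X)$ cannot be isolated in the subspace topology by exhibiting, inside every basic Vietoris neighbourhood of $K$, another infinite compact set distinct from $K$. Since $K$ is compact but not finite, $K$ has a non-isolated point (in itself), so $K$ is infinite; I would fix a point $p \in K$ that is an accumulation point of $K$, so that there is a sequence of distinct points $\{p_n\} \subset K \setminus \{p\}$ converging to $p$ (using that $K$ is compact, hence every infinite subset has an accumulation point; replacing $K$ by a countable piece if necessary, or simply noting a compact infinite Hausdorff space contains a nontrivial convergent sequence — this is where regularity/$T_1$ is used).

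The core step: let $\langle U_1, \ldots, U_m \rangle$ be a basic neighbourhood of $K$ in $(\mathcal{K}(X), \tau_V)$, so $K \subset \bigcup_{i \le m} U_i$ and $K \cap U_j \ne \emptyset$ for each $j$. I want to produce $K' \in \mathcal{K}(X)$ with $K' \ne K$ and $K' \in \langle U_1, \ldots, U_m \rangle$. The point $p$ lies in some $U_{i_0}$. Since $p_n \to p$ and $U_{i_0}$ is open, all but finitely many $p_n$ lie in $U_{i_0}$; pick one such $p_n$, call it $q$, with $q \ne p$ and $q \in U_{i_0}$, and set $K' = K \setminus \{q\}$. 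I must check two things. First, $K'$ is still compact: this holds because $q$ is not isolated in $K$ (it is a term of a sequence converging to $p \in K$, and in fact one can choose $q$ so that $q$ is a limit from within $K$ as well — or more simply, $K \setminus \{q\}$ is closed in $K$ since singletons are closed, hence compact; removing a single point from a compact $T_1$ space always leaves a compact space only if that point is non-isolated, so I should instead choose $q$ to be a non-isolated point of $K$, which is possible since the set of non-isolated points of the infinite compact space $K$ is nonempty). Second, $K' \in \langle U_1, \ldots, U_m \rangle$: clearly $K' \subset K \subset \bigcup U_i$, and for each $j$ we need $K' \cap U_j \ne \emptyset$; this could fail only if $K \cap U_j = \{q\}$, so I must choose $q$ avoiding the finitely many "critical" points, each being the unique point of $K$ in some $U_j$ — since the sequence $\{p_n\}$ is infinite and these critical points are finite in number, I can pick $q = p_n$ for large $n$ avoiding all of them while still lying in $U_{i_0}$.

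So the refined recipe is: choose $p$ a non-isolated point of $K$; let $F \subset K$ be the finite set of points $x$ such that $x$ is the unique element of $K$ in $U_j$ for some $j \le m$; note $p \notin F$ is possible but even if $p \in F$ we just argue on the sequence; pick $q \in (K \setminus \{p\}) \cap U_{i_0} \setminus F$ which is moreover a non-isolated point of $K$ — and here I would observe that the set of non-isolated points of $K$ accumulating at $p$ can be taken infinite, OR simply take $q$ among $\{p_n\}$ and verify $K \setminus \{q\}$ is compact directly because $p_n \to p$ forces $K \setminus \{p_n\}$ to still contain a tail converging to $p$, hence is closed in $K$. Then $K' := K \setminus \{q\}$ works, proving $K$ is not isolated.

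The main obstacle I anticipate is the bookkeeping around compactness of $K \setminus \{q\}$: one genuinely needs $q$ to be a non-isolated point of $K$ (otherwise removing it could destroy compactness, e.g. $K = \{0\} \cup \{1/n\}$ minus an isolated $1/n$ is still compact, but in general one must be careful), and simultaneously $q$ must avoid the finite "critical" set $F$ and lie in the chosen $U_{i_0}$. Establishing that $K$ has infinitely many non-isolated points, or at least one non-isolated point that is a limit of other non-isolated points lying in $U_{i_0}$, requires a small Cantor–Bendixson-type argument; alternatively, and more cleanly, if $K$ has only finitely many non-isolated points then $K$ is a finite union of convergent-sequence-like pieces and one can directly find the required $K'$ by truncating one such sequence deep enough to stay inside $\langle U_1,\ldots,U_m\rangle$. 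I would present the convergent-sequence approach as the cleanest: every infinite compact Hausdorff space contains a nontrivial convergent sequence $\{x\}\cup\{x_n\}$, take this sequence inside $K$, note $x$ lies in some $U_{i_0}$, and for all sufficiently large $n$ the set $K_n := K \setminus \{x_n : n \ge N\} \cup \{x\}$ — wait, more simply $K' := K \setminus \{x_N\}$ for $N$ large enough that $x_N \in U_{i_0}$ and $x_N \notin F$ — is closed in $K$ (its complement $\{x_N\}$ is open in $K$? no) — so I will instead use $K' := (K \setminus \{x_n : n \ge N\})$ for large $N$, which is closed in $K$ hence compact, is properly contained in $K$, still meets every $U_j$ (after discarding the finite critical exceptions), and lies in $\bigcup U_i$. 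That version sidesteps the isolated-point subtlety entirely.
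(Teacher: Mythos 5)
Your plan has the right overall shape --- exhibit, inside every basic neighbourhood $\langle U_1,\dots,U_m\rangle$ of $K$, an infinite compact $K'\neq K$ --- but the object you remove from $K$ is the wrong one, and the gaps are genuine. First, your starting point, that every infinite compact Hausdorff (or regular $T_1$) space contains a nontrivial convergent sequence, is false: $\beta\mathbb{N}$, and more starkly $\mathbb{N}^{*}=\beta\mathbb{N}\setminus\mathbb{N}$ (infinite, compact, without isolated points), contain no nontrivial convergent sequences, and nothing in the lemma's hypotheses excludes such a $K$. Second, your compactness bookkeeping is inverted: in a Hausdorff ambient space $K\setminus\{q\}$ is compact precisely when it is closed in $K$, i.e.\ precisely when $q$ \emph{is} isolated in $K$; choosing $q$ non-isolated, as your ``refined recipe'' instructs, always destroys compactness (your own example $\{0\}\cup\{1/n:n\in\mathbb{N}\}$ shows this if you delete $0$). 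Third, the final fallback $K':=K\setminus\{x_n:n\ge N\}$ is not closed in $K$ in general --- a tail of a convergent sequence is rarely open in $K$ (take $K=[0,1]$, $x_n=1/n$: the complement of the tail is dense in $K$, hence not compact) --- and even when it is compact it can be finite (take $K$ itself a convergent sequence), in which case $K'\notin\mathcal{K}(X)\setminus\mathcal{F}(X)$ and witnesses nothing about isolation in that subspace.

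The repair, which is the paper's argument, is to delete an \emph{open} set rather than a point or a tail of a sequence: $K\setminus V=K\setminus(V\cap K)$ is then closed in $K$, hence compact, with no case analysis at all. Concretely, suppose $K$ were isolated with witnessing neighbourhood $\langle U_1,\dots,U_k\rangle$, the $U_i$ taken pairwise disjoint. Some $U_j$ contains infinitely many points of $K$; by Hausdorffness one finds $b\in K\cap U_j$ and an open $V\subset U_j$ with $b\in V$ and $(K\setminus V)\cap U_j$ still infinite (if every such $V$ swallowed all but finitely many points of $K\cap U_j$, two points of $K\cap U_j$ with disjoint neighbourhoods would force $K\cap U_j$ to be finite). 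Then $K\setminus V$ is an infinite compact set, distinct from $K$ since $b\notin K\setminus V$, and still lies in $\langle U_1,\dots,U_k\rangle$ because it meets $U_j$ in an infinite set and, by disjointness, contains $K\cap U_i$ for every $i\neq j$. This gives the contradiction uniformly, for $\mathbb{N}^{*}$ and every other infinite compact set, with no appeal to sequences.
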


\begin{proof}
Take any $K\in \mathcal{K}(X)\setminus \mathcal{F}(X)$. Then $K$ is
an infinite compact subset of $X$. Suppose that $K$ is an isolated
point in $(\mathcal{K}(X)\setminus \mathcal{F}(X), \tau_V)$. Then
there exists a basic neighborhood $\langle U_{1}, \ldots,
U_{k}\rangle$ of $K$ in $(\mathcal{K}(X), \tau_V)$ such that
$\langle U_{1}, \ldots, U_{k}\rangle\cap (\mathcal{K}(X)\setminus
\mathcal{F}(X))=\{K\}$, where the family $\{U_{i}: i\leq k\}$ is a
disjoint collection of open subsets of $X$. Clearly,
$K=\bigcup_{i=1}^{k}U_{i}$; otherwise, we can pick any point $a\in
\bigcup_{i=1}^{k}U_{i}\setminus K$, then $K\cup\{a\}\in\langle
U_{1}, \ldots, U_{k}\rangle\cap (\mathcal{K}(X)\setminus
\mathcal{F}(X))$, which is a contradiction. Since $K$ is infinite,
there exists $j\leq k$ such that $|K\cap U_{j}|\geq\omega$. Since
$X$ is Hausdorff, there exist a point $b\in K\cap U_{j}$ and an open
neighborhood $V\subset U_{j}$ of $b$ such that $|(K\setminus V)\cap
U_{j}|\geq\omega$, hence $K\setminus V\in\mathcal{K}(X)\setminus
\mathcal{F}(X)$, which is a contradiction. Therefore, $K$ is an
isolated point in $(\mathcal{K}(X)\setminus \mathcal{F}(X),
\tau_V)$.
\end{proof}

Since Fell topology is coarser than the Vietoris topology, it
follows from Lemmas~\ref{l111} and ~\ref{l112} that we have the
following two corollaries.

\begin{corollary}\label{c111}
Let $X$ be a sequential space. Then $(\mathcal{F}(X), \tau_F)$ is
open in $(\mathcal{K}(X), \tau_F)$ if and only if $X$ is discrete.
\end{corollary}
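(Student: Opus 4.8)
The plan is to derive Corollary~\ref{c111} directly from Lemma~\ref{l111}, using that the Fell topology on $CL(X)$ is coarser than the Vietoris topology. First I would dispose of the sufficiency, which carries no content: if $X$ is discrete, then every compact subset of $X$ is finite, so $\mathcal{K}(X)=\mathcal{F}(X)$ and hence $\mathcal{F}(X)$ is trivially open in $(\mathcal{K}(X), \tau_F)$; the sequentiality of $X$ is not used here. All the work is in the necessity.

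For the necessity, the key point I would record is the inclusion $\tau_F\subseteq\tau_V$ on $CL(X)$. This follows from the standard identity $\langle U_1,\ldots,U_k\rangle=\big(\bigcup_{i=1}^k U_i\big)^+\cap\bigcap_{i=1}^k U_i^-$: the Vietoris subbase $\{U^+: U\ \text{open}\}\cup\{U^-: U\ \text{open}\}$ contains the Fell subbase $\{U^-: U\ \text{open}\}\cup\{(K^c)^+: K\ \text{compact},\ K\neq X\}$, since each $(K^c)^+$ is of the form $U^+$ with $U=K^c$. Passing to the subspace $\mathcal{K}(X)$, the topology that $\tau_F$ induces on $\mathcal{K}(X)$ is coarser than the one induced by $\tau_V$; in particular, every $\tau_F$-open subset of $\mathcal{K}(X)$ is $\tau_V$-open.

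Granting this, the argument is short: assuming $(\mathcal{F}(X), \tau_F)$ is open in $(\mathcal{K}(X), \tau_F)$, the previous step shows $(\mathcal{F}(X), \tau_V)$ is open in $(\mathcal{K}(X), \tau_V)$, and then Lemma~\ref{l111}, applied to the sequential space $X$, gives that $X$ is discrete.

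I do not expect a genuine obstacle: the statement is a formal consequence of Lemma~\ref{l111} together with the coarseness of the Fell topology. The only place to be slightly careful is that ``coarser'' must be applied at the level of the subspace $\mathcal{K}(X)$ and to the particular set $\mathcal{F}(X)$, rather than merely to $CL(X)$ --- but this is routine, since the subspace topology is monotone in the ambient topology.
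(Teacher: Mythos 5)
Your proposal is correct and is exactly the paper's argument: the paper derives Corollary~\ref{c111} from Lemma~\ref{l111} by noting that the Fell topology is coarser than the Vietoris topology, which is precisely your reduction (your verification that $\tau_F\subseteq\tau_V$ via $(K^c)^+=U^+$ with $U=K^c$ open, and the trivial sufficiency from $\mathcal{K}(X)=\mathcal{F}(X)$ for discrete $X$, just fills in details the paper leaves implicit).
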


\begin{corollary}\label{c112}
Let $X$ be a space. Then any point of $(\mathcal{K}(X)\setminus
\mathcal{F}(X), \tau_F)$ is not isolated.
\end{corollary}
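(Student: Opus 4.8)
The plan is to read Corollary~\ref{c112} off from Lemma~\ref{l112} together with the comparison of the two hyperspace topologies, so the argument will be very short. First I would record the elementary but decisive fact that the Fell topology $\tau_F$ on $CL(X)$ is coarser than the Vietoris topology $\tau_V$: every subbasic Fell-open set is already a Vietoris-open set, since $U^-\in\tau_V$ for every open $U\subseteq X$, and $(K^c)^+\in\tau_V$ because $K^c$ is open whenever $K$ is a compact (proper) subset of $X$. Hence $\tau_F\subseteq\tau_V$ on $CL(X)$, and since passing to a subspace is monotone in the ambient topology, the Fell subspace topology on $\mathcal{K}(X)\setminus\mathcal{F}(X)$ is contained in the Vietoris subspace topology on $\mathcal{K}(X)\setminus\mathcal{F}(X)$.

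Next I would argue by contradiction. Suppose some $K\in\mathcal{K}(X)\setminus\mathcal{F}(X)$ is isolated in $(\mathcal{K}(X)\setminus\mathcal{F}(X), \tau_F)$, i.e. $\{K\}$ is open in the Fell subspace topology. By the previous paragraph $\{K\}$ is then also open in $(\mathcal{K}(X)\setminus\mathcal{F}(X), \tau_V)$, so $K$ is an isolated point of $(\mathcal{K}(X)\setminus\mathcal{F}(X), \tau_V)$. This contradicts Lemma~\ref{l112}, which asserts that no point of $(\mathcal{K}(X)\setminus\mathcal{F}(X), \tau_V)$ is isolated. Therefore no such $K$ exists, and every point of $(\mathcal{K}(X)\setminus\mathcal{F}(X), \tau_F)$ is non-isolated.

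I do not expect any real obstacle. The only point that warrants a moment's care is that ``isolated'' refers to the subspace topology, so one should be explicit that the inclusion $\tau_F\subseteq\tau_V$ restricts correctly to $\mathcal{K}(X)\setminus\mathcal{F}(X)$; beyond that the corollary is simply the one-step transfer of a non-isolatedness statement from a finer to a coarser topology, using the already-established Lemma~\ref{l112}.
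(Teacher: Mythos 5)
Your proof is correct and is essentially the paper's own argument: the paper obtains Corollary~\ref{c112} from Lemma~\ref{l112} in one line, by exactly the observation that the Fell topology is coarser than the Vietoris topology, so a point isolated in the Fell subspace topology would be isolated in the Vietoris subspace topology. Your write-up just makes the subbase comparison and the restriction to the subspace explicit.
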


Now we can prove the following theorem.

\begin{theorem}\label{tt40}
Let $X$ be a sequential space with a
$G_{\delta}^{\ast}$-diagonal\footnote{A space $X$ is said to have a
{\it $G_{\delta}^{\ast}$-diagonal} if, there is a sequence
$\{\mathscr{U}_{n}\}$ of open covers of $X$, such that, for each
$x\in X$, $\{x\}=\bigcap_{n\in\mathbb{N}}\overline{\mbox{st}(x,
\mathscr{U}_{n})}$.}. Then $(\mathcal{K}(X)\setminus \mathcal{F}(X),
\tau_V)$ is a $D_{1}$-space if and only if $X$ is discrete.
\end{theorem}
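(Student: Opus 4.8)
The ``if'' direction is immediate: if $X$ is discrete then every compact subset of $X$ is finite, so $\mathcal{K}(X)\setminus\mathcal{F}(X)$ is empty and is vacuously a $D_1$-space. So the work is all in the converse, and the plan there is a proof by contradiction that exhibits a countably infinite subset of $\mathcal{K}(X)\setminus\mathcal{F}(X)$ with no accumulation point. (Notice that the argument I outline below uses only that $X$ is sequential and Hausdorff; I do not see a need for the $G_\delta^\ast$-diagonal, which could however be used to shorten the reduction if one preferred to route it through metrizability of the relevant compact subsets.)

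First I would assume $(\mathcal{K}(X)\setminus\mathcal{F}(X),\tau_V)$ is a $D_1$-space and suppose, for contradiction, that $X$ is not discrete. Fix a non-isolated point $x$ of $X$; then $X\setminus\{x\}$ is not closed, hence not sequentially closed, so some sequence in $X\setminus\{x\}$ converges to $x$, and after passing to a subsequence I may assume it is a sequence $\{x_n:n\in\mathbb{N}\}$ of pairwise distinct points, none equal to $x$. For each $n$ put $K_n=\{x\}\cup\{x_i:i\geq n\}$; each $K_n$ is a convergent sequence together with its limit, hence an infinite compact subset of $X$, so $K_n\in\mathcal{K}(X)\setminus\mathcal{F}(X)$, and the $K_n$ strictly decrease (note $x_n\in K_n\setminus K_{n+1}$), so $E=\{K_n:n\in\mathbb{N}\}$ is a countably infinite subset of $\mathcal{K}(X)\setminus\mathcal{F}(X)$.

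The crux is to show that $E$ has no accumulation point in $\mathcal{K}(X)\setminus\mathcal{F}(X)$. I would first verify that $K_n\to\{x\}$ in $(\mathcal{K}(X),\tau_V)$: by Lemma~\ref{l11111} any basic Vietoris neighbourhood of $\{x\}$ contains one of the form $\langle U\rangle$ with $U$ an open neighbourhood of $x$, and since $x_i\to x$ one has $K_n\subseteq U$ for all large $n$. Because $X$ is Hausdorff, $(\mathcal{K}(X),\tau_V)$ is Hausdorff (see \cite{M1951}), so the convergent sequence $(K_n)$ of pairwise distinct points has $\{x\}$ as its only accumulation point in $(\mathcal{K}(X),\tau_V)$; as $\{x\}\in\mathcal{F}(X)$, this leaves $E$ with no accumulation point in the subspace $\mathcal{K}(X)\setminus\mathcal{F}(X)$. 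This is the step I expect to demand the most care, since ``no accumulation point'' must be checked relative to the whole hyperspace, i.e.\ one really has to exclude every candidate other than $\{x\}$, and that is exactly where Hausdorffness of the Vietoris hyperspace is used.

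Finally I would assemble the contradiction from results already in the paper. By Lemma~\ref{l112} every point of $(\mathcal{K}(X)\setminus\mathcal{F}(X),\tau_V)$ is non-isolated, so (the space being non-empty, as $K_1$ witnesses) its set of non-isolated points is the whole space; by Fact~1, applied to the $D_1$-space $(\mathcal{K}(X)\setminus\mathcal{F}(X),\tau_V)$, that set is countably compact. A countably compact space cannot contain a countably infinite set without an accumulation point, so $E$ provides the desired contradiction, and we conclude that $X$ is discrete.
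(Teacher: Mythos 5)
Your proof is correct, and it takes a genuinely different route from the paper's. Both arguments begin the same way: Lemma~\ref{l112} plus Fact~1 show that if $(\mathcal{K}(X)\setminus\mathcal{F}(X),\tau_V)$ is a $D_1$-space then it is countably compact. From there the paper invokes the $G_{\delta}^{\ast}$-diagonal: by Mizokami's theorem $(\mathcal{K}(X),\tau_V)$ has a $G_\delta$-diagonal, so the countably compact subspace is compact, hence closed, hence $\mathcal{F}(X)$ is open in $(\mathcal{K}(X),\tau_V)$, contradicting Lemma~\ref{l111}. You instead contradict countable compactness directly by exhibiting the infinite set $E=\{K_n\}$ with no accumulation point in the subspace, using only sequentiality (to produce the convergent sequence) and Hausdorffness of $(\mathcal{K}(X),\tau_V)$ (to see that $\{x\}$, which lies in $\mathcal{F}(X)$, is the only accumulation point of $E$ in the whole hyperspace). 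Your key step checks out: $K_n\to\{x\}$ in $\tau_V$, the $K_n$ are pairwise distinct, and in a Hausdorff (hence $T_1$) space a set of distinct terms of a convergent sequence accumulates only at the limit. The payoff is that your argument proves a strictly stronger statement --- the $G_{\delta}^{\ast}$-diagonal hypothesis is not needed at all (e.g.\ it applies to the one-point compactification of an uncountable discrete space, which is sequential but has no $G_\delta^{\ast}$-diagonal), and it also dispenses with Lemma~\ref{l111} and the appeal to Mizokami's theorem. The paper's route is shorter given the machinery it already has on hand, but yours is more elementary and more general.
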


\begin{proof}
It suffices to prove the necessity. Suppose that
$(\mathcal{K}(X)\setminus \mathcal{F}(X), \tau_V)$ is a
$D_{1}$-space, and suppose that $X$ is non-discrete. Then
$\mathcal{K}(X)\setminus \mathcal{F}(X)$ is countably compact by
Fact 1 and Lemma~\ref{l112}. Since $X$ has a
$G_{\delta}^{\ast}$-diagonal, it follows that $(\mathcal{K}(X),
\tau_V)$ has a $G_{\delta}$-diagonal by \cite[Theorem 1]{TM1}, thus
$\mathcal{K}(X)\setminus \mathcal{F}(X)$ has a $G_{\delta}$-diagonal
too. Therefore, $\mathcal{K}(X)\setminus \mathcal{F}(X)$ is compact
by \cite[Theorem 2.14]{G1984}, then $\mathcal{K}(X)\setminus \mathcal{F}(X)$ is
closed in $(\mathcal{K}(X), \tau_V)$, which shows that
$\mathcal{F}(X)$ is open in $(\mathcal{K}(X), \tau_V)$. However, by
Lemma~\ref{l111}, $\mathcal{F}(X)$ is not open in $(\mathcal{K}(X),
\tau_V)$, which is a contradiction. Therefore, $X$ is discrete.
\end{proof}

However, it is still unknown for us if Theorem~\ref{tt40} holds for
the case of the Fell topology. Therefore, we have the following two
questions.

\begin{question}
Let $X$ be a sequential space with a $G_{\delta}^{\ast}$-diagonal.
If $(\mathcal{K}(X)\setminus \mathcal{F}(X), \tau_F)$ is a
$D_{1}$-space, is $X$ discrete?
\end{question}

\begin{question}
Let $X$ be a space. Give a characterization of $X$ such that
$(\mathcal{K}(X), \tau_F)$ or $(CL(X), \tau_F)$ has a
$G_{\delta}$-diagonal.
\end{question}

The following theorem gives a characterization of a space $X$ such
that $(CL_{\emptyset}(X), \tau_F)$ is a $D_1$-space, where
$CL_{\emptyset}(X)=CL(X)\cup\{\emptyset\}$.

\begin{theorem}\label{t13}
The following statements are equivalent for space $X$.
\begin{enumerate}
\item $(CL_{\emptyset}(X), \tau_F)$ is a $D_1$-space;

\smallskip
\item $(\mathcal{F}_n(X)\cup\{\emptyset\}, \tau_F)$ is a $D_1$-space for some $n\geq 2$;

\smallskip
\item $X$ is locally compact, separable and metrizable.
\end{enumerate}
\end{theorem}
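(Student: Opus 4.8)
The plan is to prove $(1)\Rightarrow(2)\Rightarrow(3)\Rightarrow(1)$, with $(2)\Rightarrow(3)$ carrying essentially all the content. For $(1)\Rightarrow(2)$ I would just observe that for every $n\ge 1$ the set $\mathcal{F}_n(X)\cup\{\emptyset\}=\{A\in CL_\emptyset(X):|A|\le n\}$ is closed in $(CL_\emptyset(X),\tau_F)$: if $|A|\ge n+1$, choose $n+1$ points of $A$ with pairwise disjoint open neighbourhoods $U_1,\dots,U_{n+1}$ (regularity gives Hausdorffness), and $\bigcap_{i\le n+1}U_i^-$ is a neighbourhood of $A$ inside $\{C:|C|\ge n+1\}$; since a closed subspace of a $D_1$-space is a $D_1$-space, $(2)$ follows (with $n=2$, say). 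For $(3)\Rightarrow(1)$ I would pass to the one‑point compactification $\alpha X=X\cup\{\infty\}$, which is compact metrizable when $X$ is locally compact, separable and metrizable; then $(CL(\alpha X),\tau_V)$ is compact metrizable by \cite{M1951}, so its closed subspace $\{A\in CL(\alpha X):\infty\in A\}$ is compact metrizable, and $F\mapsto F\cup\{\infty\}$ is a bijection of $CL_\emptyset(X)$ onto it which, by a direct computation on the subbasic sets $U^-$ and $(K^c)^+$, is both continuous and open, hence a homeomorphism. Thus $(CL_\emptyset(X),\tau_F)$ is compact metrizable, in particular a $D_1$-space. (Alternatively one may simply quote the known fact that $(CL_\emptyset(X),\tau_F)$ is metrizable exactly when $X$ is locally compact and second countable, e.g.\ from \cite{B1993}.)

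For $(2)\Rightarrow(3)$, since $\mathcal{F}_2(X)\cup\{\emptyset\}$ is a closed subspace of $\mathcal{F}_n(X)\cup\{\emptyset\}$ I may assume $Z:=(\mathcal{F}_2(X)\cup\{\emptyset\},\tau_F)$ is a $D_1$-space (the case $|X|\le 1$ being trivial). The first step is to analyse the neighbourhoods of $\emptyset$: a basic neighbourhood of $\emptyset$ in $Z$ is a finite intersection $\bigcap_{i\le k}(K_i^c)^+\cap Z=((\bigcup_{i\le k}K_i)^c)^+\cap Z=\{A\in Z:A\cap\bigcup_iK_i=\emptyset\}$ with the $K_i$ compact, and since $\{x\}\in Z$ for every $x\in X$, such a set equals $\{\emptyset\}$ iff $\bigcup_iK_i=X$. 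Hence $\{\emptyset\}$ is open in $Z$ iff $X$ is a finite union of compacta, i.e.\ iff $X$ is compact; in that case $\mathcal{F}_2(X)=Z\setminus\{\emptyset\}$ is clopen in $Z$, so it is a $D_1$-space and $X$ is compact metrizable by Theorem~\ref{t4}, giving $(3)$. So assume $X$ is not compact. Then $\emptyset$ is a non‑isolated closed point of the $D_1$-space $Z$, hence has a countable neighbourhood base, and by the computation above this base yields a countable family of compact subsets of $X$ cofinal under inclusion; that is, $X$ is hemicompact, hence $\sigma$-compact. Moreover $Z$ is first countable, so its subspace $\mathcal{F}_1(X)\cong X$ is first countable.

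Next I would show $X$ is locally compact: if some $x\in X$ had no compact neighbourhood, then, fixing a decreasing neighbourhood base $\{V_m\}$ at $x$ and an increasing cofinal sequence $\{K_m\}$ of compacta, one would have $V_m\not\subseteq K_m\cup\{x\}$ and could pick $x_m\in V_m\setminus(K_m\cup\{x\})$; then $x_m\to x$, so $S=\{x\}\cup\{x_m:m\in\mathbb{N}\}$ is compact and hence $S\subseteq K_m$ for some $m$, contradicting $x_m\notin K_m$. Also, for each compact $K\subseteq X$ (so $K\ne X$), both $K^+$ and $\mathcal{F}_2(X)\cup\{\emptyset\}$ are closed in $(CL_\emptyset(X),\tau_F)$, so $\mathcal{F}_2(K)\cup\{\emptyset\}=K^+\cap(\mathcal{F}_2(X)\cup\{\emptyset\})$ is a closed, hence $D_1$, subspace of $Z$, inside which $\{\emptyset\}=(K^c)^+\cap(\mathcal{F}_2(K)\cup\{\emptyset\})$ is open; therefore $\mathcal{F}_2(K)$ is a closed subspace of it, so a $D_1$-space, and $K$ is metrizable by Theorem~\ref{t4}. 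Finally, writing $X=\bigcup_nK_n$ with each $K_n$ compact and $K_n\subseteq\mathrm{int}(K_{n+1})$ (possible since $X$ is locally compact and $\sigma$-compact), each $\mathrm{int}(K_{n+1})$ is a second countable open subspace of $X$ (it is a subspace of the compact metrizable set $K_{n+1}$), so $X$ is second countable, hence metrizable and separable by Urysohn's metrization theorem \cite{E1989}. This proves $(3)$.

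I expect the main obstacle to be $(2)\Rightarrow(3)$, and within it the two delicate points are: that the neighbourhood filter of $\emptyset$ in $Z$ is generated by all the sets $(K^c)^+$ with $K\subseteq X$ compact — so that countable character at $\emptyset$ is \emph{exactly} hemicompactness of $X$ — and the dichotomy that $\{\emptyset\}$ is open in $Z$ precisely when $X$ is compact, which channels the compact case back into Theorem~\ref{t4}. Once these are in hand, extracting local compactness (hemicompact plus first countable) and metrizability of all compact subsets (Theorem~\ref{t4} applied to the clopen hyperspaces $\mathcal{F}_2(K)$), and then combining them to conclude that $X$ is locally compact, separable and metrizable, should be routine.
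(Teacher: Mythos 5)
Your proof is correct, but for the main implication $(2)\Rightarrow(3)$ it takes a genuinely different route from the paper's. The paper disposes of $(2)\Rightarrow(3)$ in two lines: a $D_1$-space is a $D_0$-space, the $D_0$-property is hereditary, so $(\mathcal{F}_2(X),\tau_F)$ is a $D_0$-space, and Theorem~\ref{t10} (proved separately in Section~3) already forces $X$ to be locally compact, separable and metrizable; all the real work is thus delegated to the proof of Theorem~\ref{t10}, where cofinal families $\{K_{n,x}\}$ in $\mathcal{K}(X\setminus\{x\})$ extracted from countable bases at the singletons $\{x\}$ do the job that your single cofinal family of compacta, extracted from a countable base at the closed point $\emptyset$, does for you. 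Your argument is self-contained and isolates the feature that actually distinguishes $CL_{\emptyset}(X)$ from $CL(X)$: since $\emptyset\notin U^-$ for every nonempty $U$, the neighbourhood filter of $\emptyset$ is generated by the sets $(K^c)^+$, so countable character at $\{\emptyset\}$ is literally hemicompactness of $X$, and the dichotomy ``$\{\emptyset\}$ open iff $X$ compact'' channels the compact case into Theorem~\ref{t4}. Your handling of $(1)\Rightarrow(2)$ (closedness of $\mathcal{F}_n(X)\cup\{\emptyset\}$ via $n+1$ pairwise disjoint $U_i^-$'s) and of $(3)\Rightarrow(1)$ (one-point compactification, or citing \cite{B1993}) elaborates on what the paper leaves as ``obvious'' or cites. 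Two small points are worth recording explicitly if this is written up: the subbase for $\tau_F$ excludes $K=X$, so the equivalence ``$\{\emptyset\}$ open iff $X$ compact'' needs the one-line remark that a compact Hausdorff space with at least two points is the union of two proper compact subsets; and when you apply Theorem~\ref{t4} to $\mathcal{F}_2(K)$ you should check (routinely) that the topology induced on $\mathcal{F}_2(K)$ by $(\mathcal{F}_2(X),\tau_F)$ coincides with the Fell topology of the space $K$ itself.
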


\begin{proof}
By \cite[Theorem 5.1.5]{B1993}, we have (3) $\Rightarrow$ (1). The
implication of (1) $\Rightarrow$ (2) is obvious. It suffices to
prove that (2) $\Rightarrow$ (3). Assume
$(\mathcal{F}_n(X)\cup\{\emptyset\}, \tau_F)$ is a $D_1$-space for
some $n\geq 2$, then $(\mathcal{F}_2(X)\cup\{\emptyset\}, \tau_F)$
is a $D_0$-space, hence $(\mathcal{F}_2(X), \tau_F)$ is a
$D_0$-space, by Theorem ~\ref{t10}, $X$ is locally compact,
separable and metrizable.
\end{proof}

\smallskip
\section{The $D_{0}$-property of hyperspaces}
In this section, we mainly discuss the $D_{0}$-property of the
hyperspaces. We first give a characterization of a space $X$ such
that $(CL(X), \tau_F)$ is a $D_0$-space.

\begin{theorem}\label{t10}
The following statements are equivalent for space $X$.

\begin{enumerate}
\item $(CL(X), \tau_F)$ is metrizable;

\smallskip
\item $(CL(X), \tau_F)$ is a $\gamma$-space;

\smallskip
\item $(CL(X), \tau_F)$ is a $D_0$-space;

\smallskip
\item $(\mathcal{K}(X), \tau_F)$ is a $D_0$-space;

\smallskip
\item $(\mathcal{F}(X), \tau_F)$ is a $D_0$-space;

\smallskip
\item $(\mathcal{F}_2(X), \tau_F)$ is a $D_0$-space;

\smallskip
\item $X$ is locally compact, separable and metrizable.
\end{enumerate}
\end{theorem}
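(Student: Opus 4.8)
The plan is to establish all seven conditions at once via the cycle $(1)\Rightarrow(2)\Rightarrow(3)\Rightarrow(5)\Rightarrow(6)\Rightarrow(7)\Rightarrow(1)$, with $(4)$ absorbed by the two trivial remarks $(3)\Rightarrow(4)$ and $(4)\Rightarrow(6)$. All implications among $(3)$, $(4)$, $(5)$, $(6)$ are immediate, since $\mathcal{F}_2(X)\subseteq\mathcal{F}(X)\subseteq\mathcal{K}(X)\subseteq CL(X)$ carry the subspace topologies and ``$D_0$-space'' is hereditary to arbitrary subspaces (a compact subset of a subspace is compact in the whole space, and a countable local base there restricts). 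For $(1)\Rightarrow(2)$ I use that every metrizable space is a $\gamma$-space, and $(2)\Rightarrow(3)$ is the implication noted in the Introduction. For $(7)\Rightarrow(1)$ I would quote the metrization theorem for the Fell topology (see \cite{B1993}): if $X$ is locally compact, separable and metrizable, then $(CL_\emptyset(X),\tau_F)$ is compact and metrizable, and since $CL(X)=X^-$ is an open subspace of $CL_\emptyset(X)$, $(CL(X),\tau_F)$ is metrizable. So the whole weight of the theorem lies in $(6)\Rightarrow(7)$.

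Assume $(\mathcal{F}_2(X),\tau_F)$ is a $D_0$-space. Since $X\cong\mathcal{F}_1(X)$ is a subspace, $X$ is a $D_0$-space, in particular first countable. Fix $x\in X$. The point $\{x\}$ is a one-point (hence compact) subset of $(\mathcal{F}_2(X),\tau_F)$, so it has a countable local base which, because $\{x\}$ is a single point, may be taken of the form $\{V_k^-\cap(K_k^c)^+\cap\mathcal{F}_2(X):k\in\omega\}$ with $x\in V_k$ open and $K_k$ compact, $x\notin K_k$. If $C\subseteq X$ is compact with $x\notin C$, then $(C^c)^+\cap\mathcal{F}_2(X)$ is a neighbourhood of $\{x\}$, so $V_k^-\cap(K_k^c)^+\subseteq(C^c)^+$ for some $k$; evaluating this inclusion on the sets $\{x,b\}$ with $b\in X\setminus K_k$ forces every such $b$ to lie outside $C$, i.e. $C\subseteq K_k$. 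Thus $\{K_k:k\in\omega\}$ is cofinal, under inclusion, among the compact subsets of $X\setminus\{x\}$; in particular $X\setminus\{x\}=\bigcup_k K_k$ is $\sigma$-compact (so $X$ is $\sigma$-compact) and hemicompact. A hemicompact first-countable space is locally compact: if some point lay outside $\mathrm{int}\,K_k$ for every $k$, one could pick points of $O_k\setminus K_k$ (for $(O_k)$ a decreasing local base at that point) forming a sequence converging to it and escaping every $K_k$, and this sequence together with its limit would be a compact set contained in no $K_k$, a contradiction. Hence $X\setminus\{x\}$ is locally compact; being open in $X$ it consists of points with compact $X$-neighbourhoods, and as $x$ was arbitrary, $X$ is locally compact.

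Next I would show every compact $C\subseteq X$ is metrizable. Since $C$ is compact, the Fell and Vietoris topologies on $\mathcal{F}_2(C)$ coincide and both equal the topology $\mathcal{F}_2(C)$ inherits from $(\mathcal{F}_2(X),\tau_F)$; moreover $(\mathcal{F}_2(C),\tau_V)$ is compact, being the continuous image of $C\times C$ under $(a,b)\mapsto\{a,b\}$. As a subspace of the $D_0$-space $(\mathcal{F}_2(X),\tau_F)$, this compact space $\mathcal{F}_2(C)$ is again a $D_0$-space, hence perfect (in a compact Hausdorff, hence normal, space a closed set with a countable local base is a $G_\delta$). By Lemma~\ref{l4}, $C$ has a $G_\delta$-diagonal, so $C$ is metrizable by \cite[Theorem 2.14]{G1984}, using that $C$ is (countably) compact. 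Assembling the pieces: $X$ is locally compact with every compact subset metrizable, hence locally metrizable; being locally compact and $\sigma$-compact, $X$ is paracompact, so Smirnov's metrization theorem gives that $X$ is metrizable; and a $\sigma$-compact metrizable space is separable. This is $(7)$.

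I expect the main obstacle to be the local-compactness step: extracting, from the single countable local base at the point $\{x\}$ in the Fell hyperspace, the cofinality of the $K_k$ among all compacta missing $x$, equivalently the hemicompactness of $X\setminus\{x\}$. Once that is in hand, local compactness, $\sigma$-compactness, metrizability of the compact subsets (through Lemma~\ref{l4}), and finally metrizability and separability of $X$ all follow in a fairly routine way, as does the bookkeeping with the inclusions $\mathcal{F}_2(X)\subseteq\mathcal{F}(X)\subseteq\mathcal{K}(X)\subseteq CL(X)$.
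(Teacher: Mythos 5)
Your proposal is correct and follows essentially the same route as the paper: all the weight is placed on $(6)\Rightarrow(7)$, where you extract from a countable Fell-local base at $\{x\}$ in $\mathcal{F}_2(X)$ a countable cofinal family of compacta in $X\setminus\{x\}$ (hence hemicompactness, local compactness and $\sigma$-compactness), and then use Lemma~\ref{l4} on the compact subspace $\mathcal{F}_2(C)$ to get metrizability of each compact $C$, assembling $(7)$ via paracompactness. Your treatment is, if anything, slightly more explicit than the paper's (e.g.\ the identification of the subspace topology on $\mathcal{F}_2(C)$ with the Vietoris topology, and the final Smirnov step), but there is no substantive difference in method.
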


\begin{proof}
The implications (1) $\Rightarrow$ (2) $\Rightarrow$ (3)
$\Rightarrow$ (4) $\Rightarrow$ (5) $\Rightarrow$ (6) are trivial.
(7) $\Leftrightarrow$ (1) by \cite[Theorem 5.1.5]{B1993}. We only
prove (6) $\Rightarrow$ (7).

Assume that $(\mathcal{F}_2(X), \tau_F)$ is a $D_0$-space, then $X$
is a $D_0$-space. Fix any $x\in X$; then let $\{V_n^-\cap (K_{n,
x}^c)^+\cap \mathcal{F}_2(X): n\in \mathbb{N}\}$ be a countable
local base at $\{x\}$ in $(\mathcal{F}_2(X), \tau_F)$, where each
$V_{n}$, $K_{n, x}$ is open,  compact in $X$ respectively. For any
compact $K\in \mathcal{K}(X\setminus \{x\})$, $(K^c)^+$ is an open
neighborhood of $\{x\}$, there exists $n\in\mathbb{N}$ such that
$V_n^-\cap (K_{n, x}^c)^+\cap \mathcal{F}_2(X)\subset (K^c)^+\cap
\mathcal{F}_2(X)$, then $K\subset K_{n, x}$. Therefore, $\{K_{n, x}:
n\in \mathbb{N}\}$ is cofinal in $\mathcal{K}(X\setminus \{x\})$.

\smallskip
{\bf Claim 4} $X$ is locally compact.

Fix $x\in X$, pick any $y\neq x$ and let $\{U_n: n\in \mathbb{N}\}$
be a decreasingly local base at $x$ in $X$ with $y\notin U_n$ for
each $n\in\mathbb{N}$. We claim that there exist $n\in\mathbb{N}$
and $k\in\mathbb{N}$ such that $U_k\subset K_{n, y}$; otherwise, for
any $n\in\mathbb{N}$, pick $x_n\in U_n\setminus K_{n, y}$; then
$x_n\to x$ as $n\rightarrow\infty$ in $X$ and $S=\{x\}\cup\{x_n:
n\in \mathbb{N}\}\in \mathcal{K}(X\setminus \{y\})$, then $S\subset
K_{m, y}$ for some $m\in\mathbb{N}$ since $\{K_{n, y}: n\in
\mathbb{N}\}$ is cofinal in $\mathcal{K}(X\setminus \{y\})$. This is
a contradiction. Hence $X$ is locally compact.

Fix any compact subset $K$ of $X$. Since $(\mathcal{F}_2(K),
\tau_F)$ is a compact $D_0$-space,  then $(\mathcal{F}_2(K),
\tau_F)$ is perfect. By Lemma ~\ref{l4} and \cite[Theorem 2.13]{G1984}, $K$ is
metrizable. Hence $X$ locally metrizable by Claim 4. By the above
proof, $X$ is also a $\sigma$-compact space, thus it is separable
and locally compact metrizable.
\end{proof}

\begin{remark}
By Theorem~\ref{t10}, $(CL(\mathbb{R}), \tau_F)$ is a $D_{0}$-space
(indeed, it is metrizable, thus perfect normal), where $\mathbb{R}$
is the real number with the usual topology; however,
$(CL(\mathbb{R}), \tau_F)$ is not a $D_{1}$-space by
Theorem~\ref{t4}.
\end{remark}

A space $(X, \tau)$ is a {\it $\gamma$-space} if there exists a
function $g: \omega\times X \to \tau$ such that (i) $\{g(n, x): n\in
\omega\}$ is a base at $x$; (ii) for each $n\in \omega$ and $x\in
X$, there exists $m\in \omega$ such that $y\in g(m, x)$ implies
$g(m, y)\subset g(n, x)$. By \cite[Theorem 10.6(iii)]{G1984}, each
$\gamma$-space is a $D_0$-space.

In \cite{LL2021},  F. Lin, R. Shen and C. Liu proved that under
(MA+$\neg$CH), $(CL(X), \tau_V)$ is a $\gamma$-space if and only if
$(CL(X), \tau_V)$ is a $D_0$-space if and only if $X$ is a separable
metrizable space and $NI(X)$ is compact. Therefore, we have the
following question.

\begin{question}
If $(CL(X), \tau_V)$ is a $D_0$-space, is $(CL(X), \tau_V)$ a
$\gamma$-space?
\end{question}

For any non-separable and metrizable space $X$, it follows from
\cite[Theorem 5.17]{LL2021} and the following Theorem~\ref{tt44}
that $(CL(X), \tau_V)$ is not a $\gamma$-space and $(\mathcal{K}(X),
\tau_V)$ is a $D_0$-space.

\begin{lemma}\label{l3}\cite{M1973}
The following are equivalent for a space $X$.
\begin{enumerate}
\item $X$ is an open compact-covering\footnote{A continuous map $f: X\rightarrow Y$ is called {\it compact-covering} if each compact subset of $Y$ is the image of some compact subset of $X$.} image of a metric space;

\smallskip
\item Each compact subset of $X$ is metrizable and has a countable
neighborhood base in $X$;

\smallskip
\item Every compact subset of $X$ has a countable outer base\footnote{A collection $\mathcal{B}$ of open subsets of a space $X$ is called
an {\it outer base} \cite{M1973} of a subset $A$ in $X$, if there
exists $B_{x}\in \mathcal{B}$ such that $x\in B_{x}\subset U$ for
every $x\in A$ and an open neighborhood $U$ of $x$ in $X$.} in $X$.
\end{enumerate}
\end{lemma}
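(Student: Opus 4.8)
The plan is to prove the cyclic chain $(1)\Rightarrow(3)\Rightarrow(2)\Rightarrow(1)$: the first two implications are short, and essentially all the content sits in $(2)\Rightarrow(1)$, which produces a metric domain by a Ponomarev-type construction. For $(1)\Rightarrow(3)$, let $f\colon M\to X$ be an open compact-covering surjection with $M$ metrizable, and fix a nonempty compact $K\subseteq X$. Since $f$ is compact-covering, $K=f(C)$ for some compact $C\subseteq M$. For each $n\in\mathbb{N}$ pick a finite family $\mathcal{B}_{n}$ of open balls of radius $1/n$ in $M$ covering $C$, set $\mathcal{B}_{M}=\bigcup_{n}\mathcal{B}_{n}$, and put $\mathcal{B}=\{f(B):B\in\mathcal{B}_{M}\}$; every $f(B)$ is open in $X$ because $f$ is open, so $\mathcal{B}$ is a countable family of open sets. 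Given $x\in K$ and an open $U\ni x$, choose $y\in C$ with $f(y)=x$; then $f^{-1}(U)$ is an open neighborhood of $y$, hence contains the open ball of radius $2/n$ about $y$ for some $n$, and any $B\in\mathcal{B}_{n}$ with $y\in B$ is contained in that ball, hence in $f^{-1}(U)$, so $x\in f(B)\subseteq U$. Thus $\mathcal{B}$ is a countable outer base of $K$.

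For $(3)\Rightarrow(2)$, let $\mathcal{B}=\{B_{n}:n\in\mathbb{N}\}$ be a countable outer base of a compact $K$. If $O=\widetilde{O}\cap K$ is open in $K$ and $x\in O$, the outer base yields $B_{n}$ with $x\in B_{n}\subseteq\widetilde{O}$, so $x\in B_{n}\cap K\subseteq O$; hence $\{B_{n}\cap K:n\in\mathbb{N}\}$ is a countable base for $K$, so $K$ is second countable, and since it is also $T_{1}$ and regular it is metrizable by Urysohn's metrization theorem. If $W\supseteq K$ is open, compactness of $K$ gives finitely many members of $\mathcal{B}$ contained in $W$ whose union covers $K$; therefore the finite unions of members of $\mathcal{B}$ that contain $K$ form a countable neighborhood base of $K$ in $X$, and $(2)$ holds.

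For $(2)\Rightarrow(1)$, first note that $(2)$ applied to singletons makes $X$ first countable. Fix a nonempty compact $K\subseteq X$, a compatible metric $\rho_{K}$ on $K$ (available by $(2)$), and a decreasing countable neighborhood base $\{W^{K}_{m}:m\in\mathbb{N}\}$ of $K$ in $X$. Using the metrizability and the countable character of $K$ together with the compactness of $K$ and the regularity of $X$, construct by a star-refinement induction a sequence $\langle\mathcal{V}^{K}_{m}\rangle$ of finite open (in $X$) covers of $K$ such that each member of $\mathcal{V}^{K}_{m}$ lies in $W^{K}_{m}$, $\mathcal{V}^{K}_{m+1}$ refines $\mathcal{V}^{K}_{m}$, the $\rho_{K}$-diameters of the traces on $K$ tend to $0$, and $\{\mbox{st}(x,\mathcal{V}^{K}_{m}):m\in\mathbb{N}\}$ is a neighborhood base at $x$ in $X$ for every $x\in K$. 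Threading these finite covers in the manner of Ponomarev yields a metrizable space $M_{K}$ (a suitably coherent subspace of the countable product $\prod_{m}\mathcal{V}^{K}_{m}$ of finite discrete spaces) together with an open map $g_{K}$ of $M_{K}$ onto an open neighborhood of $K$ under which $K$ is the image of a compact subset of $M_{K}$. The topological sum $M=\bigoplus_{K}M_{K}$ over all nonempty compact $K\subseteq X$, with $f=\bigoplus_{K}g_{K}$, is then the desired metrizable preimage: $f$ is open because a sum of open maps is open, $f$ is onto because $x$ lies in the image of $M_{\{x\}}$, and $f$ is compact-covering because a compact $L\subseteq X$ is already the image of a compact subset of the summand $M_{L}$.

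I expect the main obstacle to be exactly the local step in $(2)\Rightarrow(1)$: producing one sequence of \emph{finite} open covers near the metrizable compactum $K$ whose stars simultaneously form neighborhood bases at all points of $K$ and shrink inside the prescribed neighborhoods $W^{K}_{m}$, and then arranging the associated Ponomarev space to map \emph{openly} onto a neighborhood of $K$ rather than merely onto $K$ itself; it is precisely here that metrizability of the compact subsets and their countable character in $X$ must be used together. The remaining points --- the two implications above and the formal verifications that the sum map is open, onto and compact-covering --- are routine.
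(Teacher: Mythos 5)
First, a remark on the comparison: the paper gives no proof of this lemma at all --- it is quoted from Michael \cite{M1973} --- so your argument can only be judged on its own terms. Your implications $(1)\Rightarrow(3)$ and $(3)\Rightarrow(2)$ are correct and complete.

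The gap is in $(2)\Rightarrow(1)$, and it is not merely that the ``local step'' is asserted rather than carried out: as described, that step is false. Your $M_{K}$ is a subspace of a countable product $\prod_{m}\mathcal{V}^{K}_{m}$ of \emph{finite} discrete spaces, hence second countable; and if $g\colon M\to O$ is a continuous open surjection and $\mathcal{B}$ is a countable base of $M$, then $\{g(B):B\in\mathcal{B}\}$ is a countable base of $O$. So $g_{K}(M_{K})$ would have to be a second-countable open neighborhood of $K$. Such a neighborhood need not exist even when $X$ satisfies $(2)$ for the trivial reason that $X$ is metrizable: in the hedgehog $J(\omega_{1})$ with $\omega_{1}$ spines, every open neighborhood of the centre contains an uncountable set of points that are pairwise at distance $\varepsilon$, hence is not second countable, yet $J(\omega_{1})$ is an open compact-covering image of a metric space (itself). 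You are right that openness of $\bigoplus_{K}g_{K}$ forces each $g_{K}(M_{K})$ to be open in $X$, but that requirement is incompatible with building $M_{K}$ from countably many finite covers; the architecture ``sum over all compact $K$ of small Ponomarev pieces mapping onto open neighborhoods'' cannot work. A second, independent problem: the properties you impose on $\mathcal{V}^{K}_{m}$ (members inside $W^{K}_{m}$, refinement, $\rho_{K}$-diameters of the traces tending to $0$) do not imply that $\{\mathrm{st}(x,\mathcal{V}^{K}_{m})\}_{m}$ is a neighborhood base at $x$ in $X$: a member can have a small trace on $K$ and still stretch along all of $W^{K}_{m}$ off $K$ (take $K=[0,1]\times\{0\}$ in $\mathbb{R}^{2}$ and T-shaped members). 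Getting covers whose stars shrink in $X$ is essentially the countable outer base property $(3)$, and deriving $(3)$ from $(2)$ needs the countable-character hypothesis applied to all compact subsets of $K$ (for instance to the closures in $K$ of the members of a countable base of $K$), not just to $K$ itself --- a point your sketch never engages. In short, the two easy implications are fine, but the hard implication, which is the entire content of Michael's theorem, remains unproved, and the specific construction proposed for it provably fails.
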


\begin{theorem}\label{tt44}
The following statements are equivalent for a space $X$.
\begin{enumerate}
\item $(\mathcal{K}(X), \tau_V)$ is an open compact-covering image of a metric space.

\smallskip
 \item $(\mathcal{K}(X), \tau_V)$ is a $D_0$-space;

\smallskip
\item $(\mathcal{F}(X), \tau_V)$ is a $D_0$-space;

\smallskip
\item $(\mathcal{F}_2(X), \tau_V)$ is a $D_0$-space;

\smallskip
\item $X$ is a $D_0$-space and every compact subset of $X$ is metrizable.
\end{enumerate}
\end{theorem}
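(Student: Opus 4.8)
The plan is to prove the cycle $(1)\Rightarrow(2)\Rightarrow(3)\Rightarrow(4)\Rightarrow(5)\Rightarrow(1)$, with the genuinely substantive content sitting in $(4)\Rightarrow(5)$ and $(5)\Rightarrow(1)$. The first three implications are essentially free: $(1)\Rightarrow(2)$ follows because an open compact-covering image of a metric space is a $D_0$-space by Lemma~\ref{l3} together with \cite[Theorem 10.6]{G1984} (condition (3) of Lemma~\ref{l3} says every compact set has a countable outer base, which in particular gives a countable neighbourhood base, i.e. the $D_0$-property); and $(2)\Rightarrow(3)\Rightarrow(4)$ hold since $\mathcal{F}(X)$ is a closed subspace of $(\mathcal{K}(X),\tau_V)$ and $\mathcal{F}_2(X)$ is a closed subspace of $(\mathcal{F}(X),\tau_V)$, and the $D_0$-property passes to closed subspaces. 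So the work is to close the loop.

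For $(4)\Rightarrow(5)$: assume $(\mathcal{F}_2(X),\tau_V)$ is a $D_0$-space. Since $X$ embeds as the closed subspace $\mathcal{F}_1(X)$, $X$ is itself a $D_0$-space. To see every compact $K\subseteq X$ is metrizable, note $(\mathcal{F}_2(K),\tau_V)$ is a closed subspace of $(\mathcal{F}_2(X),\tau_V)$, hence a compact $D_0$-space, hence perfect (a compact $D_0$-space has every point $G_\delta$, and in a compact space a Hausdorff space with all points $G_\delta$ is perfectly normal — or one argues directly that $\mathcal F_1(K)$ is a $G_\delta$ in $\mathcal F_2(K)$). By Lemma~\ref{l4}, $K$ has a $G_\delta$-diagonal, and a compact space with a $G_\delta$-diagonal is metrizable by \cite[Theorem 2.13]{G1984}. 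This mirrors the last paragraph of the proof of Theorem~\ref{t10} exactly.

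For $(5)\Rightarrow(1)$, equivalently $(5)\Rightarrow$ "every compact subset of $(\mathcal{K}(X),\tau_V)$ has a countable outer base" (condition (3) of Lemma~\ref{l3}): this is the heart of the theorem. Let $\mathcal{C}$ be a compact subset of $(\mathcal{K}(X),\tau_V)$. The key observation is that $L:=\bigcup\mathcal{C}$ is a compact subset of $X$: indeed the union map $\mathcal C\mapsto\bigcup\mathcal C$ is, on compact sets, continuous into $(\mathcal K(X),\tau_V)$ in an appropriate sense, or more concretely, $\mathcal{C}\subseteq(V^c)^-{}^c$ forces $\bigcup\mathcal C$ to be covered by finitely many basic opens, and one extracts compactness of $L$ from compactness of $\mathcal C$ by a standard tube-type argument (every open cover of $L$ refines, via $\langle\cdot\rangle$-sets, to an open cover of $\mathcal C$). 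Now by hypothesis $L$ is compact metrizable and, since $X$ is a $D_0$-space, $L$ has a countable neighbourhood base in $X$; by Lemma~\ref{l3}, $L$ has a countable outer base $\mathcal B=\{B_m:m\in\omega\}$ in $X$. One then builds a countable outer base for $\mathcal C$ in $(\mathcal K(X),\tau_V)$ out of the sets $\langle B_{m_1},\dots,B_{m_k}\rangle$ together with a countable base for the compact metrizable space $(\mathcal K(L),\tau_V)$ (note $(\mathcal K(L),\tau_V)$ is compact metrizable since $L$ is, and $\mathcal C\subseteq\mathcal K(L)$): given $K\in\mathcal C$ and a basic neighbourhood $\langle U_1,\dots,U_k\rangle$ of $K$ in $(\mathcal K(X),\tau_V)$, shrink each $U_i$ to a $B_{m_i}\in\mathcal B$ hitting $K$ and refine using the metrizable base on $\mathcal K(L)$, checking via Lemma~\ref{l11111} that the resulting member is sandwiched between $K$ and $\langle U_1,\dots,U_k\rangle$.

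The main obstacle I expect is precisely this last construction: proving $\bigcup\mathcal C$ is compact, and then carefully assembling the countable outer base so that the chosen member genuinely contains $K$ and lies inside the given $\langle U_1,\dots,U_k\rangle$ — here Lemma~\ref{l11111} (Michael's criterion for containment of Vietoris basic sets) and Lemma~\ref{l1} (splitting a compact set along an open cover into compact pieces inside each open set) are the tools that make the bookkeeping go through. Everything else is a routine transfer of the $D_0$/perfect/$G_\delta$-diagonal machinery already used for the Fell case in Theorem~\ref{t10}.
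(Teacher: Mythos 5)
Your proposal is correct in outline but organizes the hard direction differently from the paper. The paper proves $(5)\Rightarrow(2)$ directly: it fixes a compact $\mathbf{K}\subseteq(\mathcal{K}(X),\tau_V)$, takes a countable outer base $\mathcal{B}$ of $K=\bigcup\mathbf{K}$, and builds a countable \emph{neighbourhood} base of $\mathbf{K}$ from finite unions of sets $\langle\mathcal{B}'\rangle$; the verification is the long part, using regularity of the hyperspace, Lemma~\ref{l22222} to control closures of Vietoris basic sets, and Lemma~\ref{l1} to split $\mathbf{K}$ into compact pieces. It then closes the loop with a separate step $(2)\Rightarrow(1)$, checking condition (2) of Lemma~\ref{l3} by observing that $\mathbf{H}\subseteq\mathcal{K}(\bigcup\mathbf{H})$ is metrizable. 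You instead prove $(5)\Rightarrow(1)$ in one shot by verifying condition (3) of Lemma~\ref{l3}: the family $\{\langle B_{m_1},\dots,B_{m_k}\rangle:\ B_{m_i}\in\mathcal{B}\}$ is already a countable \emph{outer} base of $\mathcal{C}$ in $(\mathcal{K}(X),\tau_V)$, since given $K\in\mathcal{C}$ and $K\in\langle U_1,\dots,U_k\rangle$ one covers $K$ by finitely many members of $\mathcal{B}$ each inside some $U_i$, adds one member inside each $U_i$ meeting $K$, and applies Lemma~\ref{l11111}. This is genuinely cleaner: it avoids Lemma~\ref{l1}, the regularity/closure bookkeeping, and the separate metrizability step, and it yields $(1)$ and $(2)$ simultaneously (a compact set with a countable outer base has a countable neighbourhood base by taking finite unions). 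Both approaches rest on the same two pillars: compactness of $\bigcup\mathcal{C}$ (cite Michael, \cite[Theorem 2.5.2]{M1951}, rather than the tube argument you gesture at) and the existence of the countable outer base of $\bigcup\mathcal{C}$ via Lemma~\ref{l3}.

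Two local repairs are needed. First, in $(4)\Rightarrow(5)$ your parenthetical ``a compact Hausdorff space with all points $G_\delta$ is perfectly normal'' is false (the lexicographically ordered square is a compact, first-countable counterexample); the correct reason $(\mathcal{F}_2(K),\tau_V)$ is perfect is that every closed subset of it is compact, hence --- being in a $D_0$-space --- has a countable neighbourhood base and so is a $G_\delta$; this is what the paper uses. Second, the ``countable base for $(\mathcal{K}(L),\tau_V)$'' plays no legitimate role in your outer-base construction: its members are open in $\mathcal{K}(L)$ but not in $\mathcal{K}(X)$, so they cannot belong to an outer base of $\mathcal{C}$ in $(\mathcal{K}(X),\tau_V)$. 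Drop it; the sets $\langle B_{m_1},\dots,B_{m_k}\rangle$ alone suffice, as above.
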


\begin{proof}
By Lemma ~\ref{l3}, we have (1) $\Rightarrow$ (2). The implications
(2) $\Rightarrow$ (3) $\Rightarrow$ (4) are trivial. we only need to
prove (4) $\Rightarrow$ (5), (5) $\Rightarrow$ (2) and (2)
$\Rightarrow$ (1).

(4) $\Rightarrow$ (5). Assume that $(\mathcal{F}_{2}(X), \tau_V)$ is
a $D_0$-space, then $X$ is a $D_0$-space. For any compact subset $H$
of $X$, $(\mathcal{F}_2(H), \tau_V)$ is a compact, $D_0$-subspace of
$(\mathcal{F}_{2}(X), \tau_V)$ \cite{M1951}, then it is perfect. By
Lemma ~\ref{l4}, $H$ has a $G_\delta$-diagonal, then $H$ is
metrizable by \cite[Theorem 2.14]{G1984}.

\smallskip
(5) $\Rightarrow$ (2). Fix any compact subset $\mathbf{K}$ of
$(\mathcal{K}(X), \tau_V)$. We prove that $\mathbf{K}$ has a
countable base in $(\mathcal{K}(X), \tau_V)$.

Indeed, let $K=\bigcup \mathbf{K}$. Then $K$ is compact in $X$ by
\cite[Theorem 2.5.2]{M1951}, hence $K$ has a countable base and is
metrizable. By Lemma ~\ref{l3}, $K$ has a countable outer base
$\mathcal{B}$. Let $\Gamma=\{\langle \mathcal{B}'\rangle:
\mathcal{B}'\in \mathcal{B}^{<\omega}\}$, $\Delta=\{\bigcup
\mathcal{C}: \mathcal{C}\in \Gamma^{<\omega}\}$; then $|\Delta|\leq
\omega$. Now it suffices to prove that $\Delta$ is a countable open
base in $(\mathcal{K}(X), \tau_V)$.

Take any open neighborhood $\widehat{U}$ of $\mathbf{K}$ in
$(\mathcal{K}(X), \tau_V)$; then there exist finitely many basic
open neighborhood $\{\widehat{U}_i: i\leq l\}$ such that
$\mathbf{K}\subset \bigcup\{\widehat{U}_i: i\leq l\}\subset
\widehat{U}$, where we may assume that $\{\widehat{U}_i: i\leq l\}$
is a minimal cover of $\mathbf{K}$. By Lemma ~\ref{l1}, there exist
finitely many compact subsets $\{\mathbf{K}_i: i\leq l\}$ of
$(\mathcal{K}(X), \tau_V)$ such that $\mathbf{K}=\bigcup_{i\leq
l}\mathbf{K}_i$ and $\mathbf{K}_i\subset \widehat{U}_i$ for each
$i\leq l$. We will prove that each $\mathbf{K}_{i}$ is contained in
an element of $\Delta$ itself contained in $\widehat{U}$, which
shows that $\mathbf{K}$ is contained in an element of $\Delta$ that
is contained in $\widehat{U}$. Therefore, without loss of
generality, we may assume $\widehat{U}=\langle U_1, ...,
U_k\rangle$, where each $U_i$ $(i\leq k)$ is an open subset in $X$.

Since $(\mathcal{K}(X), \tau_V)$ is regular, there exist finitely
many open subsets $\{\widehat{V}(i): i\leq n\}$ of $(\mathcal{K}(X),
\tau_V)$ such that $$\mathbf{K}\subset \bigcup_{i\leq
n}\widehat{V}(i)\subset \bigcup_{i\leq n}Cl(\widehat{V}(i))\subset
\widehat{U},$$ where $\widehat{V}(i)=\langle V_1(i), ...,
V_{r_i}(i)\rangle$, each $r_{i}\in\mathbb{N}$ and each $V_j(i)$ is
open in $X$.

Fix any $i\leq n$. Then $Cl(\widehat{V}(i))\subset \widehat{U}$. For
each $t\leq k$, it follows from Lemmas~\ref{l11111} and~\ref{l22222}
that $U_t\supset \overline{V_{i(t)}(i)}$ for some $i(t)\leq r_i$.
Let $A=\{i(t): t\leq k\}$ and $B=\{1, ..., r_i\}\setminus A$. Put
$$K(i)=\bigcup\{K: K\in \mathbf{K}\cap Cl(\widehat{V}(i))\}.$$ Then
$K(i)$ is compact in $X$ by \cite[Theorem 2.5.2]{M1951}.

For any $K'\in \{K: K\in \mathbf{K}\cap Cl(\widehat{V}(i))\}$, we have $K'\cap
\overline{V_j(i)}\neq \emptyset$ for $j\leq r_i$ and $K'\subset
\cup\{\overline{V_j(i)}: j\leq r_i\}$. It is obvious that
$K(i)\subset \cup\{\overline{V_j(i)}: j\leq r_i\}$.  Note that
$K'\subset K(i)$, then $K(i)\cap \overline{V_j(i)}\neq \emptyset$
for each $j$. Then, by Lemma~\ref{l22222}, we have
$$K(i)\in Cl(\widehat{V}(i)=\langle \overline{V_1(i)}, ...,
\overline{V_{r_i}(i)}\rangle.$$

For each $j\in A$,
$$K(i)\cap \overline{V_j(i)}\subset \bigcap\{U_t: t\leq k,
i(t)=j\}=W_j.$$ For $x\in K(i)\cap \overline{V_j(i)}$, pick $B_x\in
\mathcal{B}$ such that $x\in B_x\subset W_j$. Then there is a finite
subfamily $\mathcal{B}(j)$ of $\{B_x: x\in K(i)\cap
\overline{V_j(i)}\}$ such that $$K(i)\cap \overline{V_j(i)}\subset
\bigcup\mathcal{B}(j)\subset W_j.$$ For any $j\in B$, choose a
finite subfamily $\mathcal{B}(j)\subset \mathcal{B}$ such that
$$K(i)\cap \overline{V_j(i)}\subset \bigcup\mathcal{B}(j)\subset
\bigcup_{t\leq k}U_t.$$ Now put $$\Re(i)=\{\bigcup_{j\leq
r_i}\mathcal{B}'(j): \emptyset\neq \mathcal{B}'(j)\subset \mathcal{B}(j), j\leq r_i\}.$$
Clearly, $|\Re(i)|<\omega$. Then $$\mathbf{K}\cap
Cl(\widehat{V}(i))\subset \bigcup\{\langle \mathcal{W}\rangle:
\mathcal{W}\in \Re(i)\}\subset \widehat{U},$$ where $\langle
\mathcal{W}\rangle=\langle B_1, ..., B_m\rangle$ for
$\mathcal{W}=\{B_1, ..., B_m\}$. Indeed, for any $K\in
\mathbf{K}\cap Cl(\widehat{V}(i))$, then $K\subset K(i)$. For each
$j\leq r_{i}$, pick $\mathcal{C}(j)\subset \mathcal{B}(j)$ such that
$K\cap \overline{V_j(i)}\cap C\neq \emptyset$ for each $C\in
\mathcal{C}(j)$ and $K\cap \overline{V_j(i)}\subset
\bigcup\mathcal{C}(j)$. Enumerate $\bigcup_{j\leq
r_i}\mathcal{C}(j)$ as $\{C_1, ..., C_p\}$; then
$\mathcal{W}_{i}=\{C_1, ..., C_p\}\in \Re(i)$. Then $K\cap C_{l}\neq
\emptyset$ for each $l\leq p$, $K\subset \bigcup_{i\leq p}C_i$.
Therefore, $K\in \langle \mathcal{W}_{i}\rangle\subset \widehat{U}$.

Hence $$\mathbf{K}=\bigcup_{i\leq n}(\mathbf{K}\cap
Cl{\widehat{V}(i)})\subset \bigcup\{\langle \mathcal{W}_{i}\rangle:
\mathcal{W}_{i}\in \Re(i), i\leq n\}\subset \widehat{U}.$$ Note that
$\Re(i)\subset \mathcal{B}$ for $i\leq n$ and $|\Delta|\leq \omega$.
Hence $\mathbf{K}$ has a countable base in $(\mathcal{K}(X),
\tau_V)$.

(2) $\Rightarrow$ (1). By Lemma ~\ref{l3}, we only prove that each
compact subset $\mathbf{H}$ of $(\mathcal{K}(X), \tau_V)$ is
metrizable. Let $H=\bigcup \mathbf{H}$. Then $H$ is compact in $X$
by \cite[Theorem 2.5.2]{M1951}, hence it is metrizable since (2)
implies (5). Therefore, $(\mathcal{K}(H), \tau_{V})$ is compact and
metrizable, thus $\mathbf{H}\subset (\mathcal{K}(H), \tau_{V})$ is
metrizable.
\end{proof}

Since $\gamma$-space $X$ is a $D_0$-space, and every compact subset
of $X$ is metrizable, $(\mathcal{K}(X), \tau_V)$ is a $D_0$-space.
But, the following question still open.

\begin{question}
If $X$ is a $\gamma$-space, is $(\mathcal{K}(X), \tau_V)$ a
$\gamma$-space?
\end{question}

\section{The $G_\delta$-property of hyperspaces}
Finally we consider the characterization
of hyperspaces which have countable pseudocharacter.
 We say that a space $X$ has a {\it compact-$G_\delta$-property} if
every compact subset of $X$ is a $G_\delta$-set of $X$.

\begin{theorem}\label{t12}
Let $X$ be a space. Then $(\mathcal{K}(X), \tau_V)$ has a
compact-$G_\delta$-property if and only if $X$ has a
compact-$G_\delta$ property and every compact subset of $X$ is
metrizable.
\end{theorem}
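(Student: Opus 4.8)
\textbf{Proof proposal for Theorem~\ref{t12}.}

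The plan is to prove both directions by a reduction to the corresponding properties of $X$, using the familiar translation between basic Vietoris neighbourhoods of a compact family $\mathbf{K}\in\mathcal{K}(X)$ and the behaviour of the set $K=\bigcup\mathbf{K}\in\mathcal{K}(X)$, which is compact in $X$ by \cite[Theorem 2.5.2]{M1951}. For the necessity direction, suppose $(\mathcal{K}(X),\tau_V)$ has the compact-$G_\delta$-property. First I would note that $X$ is (homeomorphic to) the closed subspace $\mathcal{F}_1(X)$ of $(\mathcal{K}(X),\tau_V)$, and a $G_\delta$-set in a space restricts to a $G_\delta$-set in any subspace; since every singleton-family $\{\{x\}:x\in A\}$ corresponds to a compact subset of $\mathcal{K}(X)$ exactly when $A$ is compact in $X$, the compact-$G_\delta$-property passes down to $X$. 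For the metrizability of compact subsets of $X$: if $K\subseteq X$ is compact, then $\mathcal{K}(K)$ is a closed (indeed compact) subspace of $(\mathcal{K}(X),\tau_V)$, and the single point $K\in\mathcal{K}(K)$ is a compact subset of $\mathcal{K}(K)$, hence a $G_\delta$-set there. But $(\mathcal{K}(K),\tau_V)$ is compact, so having a point that is a $G_\delta$ means that point has countable pseudocharacter; more usefully, I would instead argue that $\mathcal{F}_2(K)$ embeds as a closed subspace and apply Lemma~\ref{l4} after first establishing that $(\mathcal{F}_2(K),\tau_V)$ is perfect — so here the key step is to upgrade ``compact-$G_\delta$'' on $\mathcal{K}(X)$ to ``perfect'' on the compact piece $\mathcal{K}(K)$, using that in a compact space every closed set is an intersection of its closed neighbourhoods and chaining this with the compact-$G_\delta$ hypothesis. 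Once $(\mathcal{F}_2(K),\tau_V)$ is perfect, Lemma~\ref{l4} gives $K$ a $G_\delta$-diagonal, and a compact space with a $G_\delta$-diagonal is metrizable by \cite[Theorem 2.14]{G1984}.

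For the sufficiency direction, assume $X$ has the compact-$G_\delta$-property and every compact subset of $X$ is metrizable; I must show that an arbitrary compact $\mathbf{K}\subseteq(\mathcal{K}(X),\tau_V)$ is a $G_\delta$-set. Set $K=\bigcup\mathbf{K}$, which is compact in $X$, hence metrizable, hence (being a compact metric space) second countable, so by Lemma~\ref{l3} it has a countable outer base $\mathcal{B}$ in $X$. The idea is to write a countable family of Vietoris-open sets whose intersection is exactly $\mathbf{K}$. First, $\mathbf{K}\subseteq\langle K'\rangle$-type sets: since $K$ is a $G_\delta$ in $X$, write $K=\bigcap_n W_n$ with $W_n$ open in $X$; then $\langle W_n\rangle^+=\{L\in\mathcal{K}(X):L\subseteq W_n\}$ is Vietoris-open and $\bigcap_n\langle W_n\rangle^+=\{L\in\mathcal{K}(X):L\subseteq K\}=\mathcal{K}(K)$. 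This reduces the problem to showing $\mathbf{K}$ is a $G_\delta$ in the compact metrizable space $(\mathcal{K}(K),\tau_V)$ — but every closed subset of a compact metrizable space is a $G_\delta$, and $\mathbf{K}$ is closed in $\mathcal{K}(X)$ (compact subsets of the Hausdorff space $(\mathcal{K}(X),\tau_V)$ are closed), hence closed in $\mathcal{K}(K)$. Thus $\mathbf{K}$ is a $G_\delta$ in $\mathcal{K}(K)$, and combining with the previous display, $\mathbf{K}$ is a $G_\delta$ in $(\mathcal{K}(X),\tau_V)$.

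The main obstacle I anticipate is the necessity direction, specifically the passage from ``$\mathcal{K}(X)$ has the compact-$G_\delta$-property'' to ``$(\mathcal{F}_2(K),\tau_V)$ is perfect for each compact $K\subseteq X$.'' Showing $X$ itself inherits compact-$G_\delta$ is routine via the closed embedding $\mathcal{F}_1(X)\hookrightarrow\mathcal{K}(X)$, but extracting enough to force metrizability of compact subsets of $X$ is the delicate point: one needs that $(\mathcal{F}_2(K),\tau_V)$, being a compact subspace of $\mathcal{K}(X)$ in which every closed subset (in particular $\mathcal{F}_1(K)$) is a $G_\delta$, is perfect — this follows because in a compact Hausdorff space in which every compact (equivalently closed) subset is a $G_\delta$, every closed set is a $G_\delta$, so the space is perfect; but one must be careful that $\mathcal{F}_2(K)$ is genuinely a \emph{closed} subspace of $\mathcal{K}(X)$ and that the compact-$G_\delta$-property is inherited by closed subspaces (which it is, since a closed subset of a closed subspace is closed in the whole space and its $G_\delta$-expression restricts). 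The remaining ingredients — $\bigcup\mathbf{K}$ compact, $\langle W\rangle^+$ open, compact subsets of Hausdorff $\mathcal{K}(X)$ closed, closed subsets of compact metric spaces are $G_\delta$, and Lemmas~\ref{l4} and~\ref{l3} — are all either standard or available in the excerpt, so I expect the write-up to be short once the perfectness step is pinned down.
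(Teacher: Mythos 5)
Your argument is correct, but it takes a genuinely different route from the paper's in both directions, and in the sufficiency direction it is noticeably cleaner. For necessity, the paper observes that $\mathcal{K}(K)$ is a compact regular space with the compact-$G_\delta$-property, deduces that it is a $D_0$-space (a compact $G_\delta$-set in a compact regular space has a countable neighbourhood base), hence a $D_1$-space since its closed sets are compact, and then invokes Theorem~\ref{t4}; you instead pass directly to perfectness of the closed subspace $(\mathcal{F}_2(K),\tau_V)$ and apply Lemma~\ref{l4} together with the metrizability of compact spaces with a $G_\delta$-diagonal, which is arguably more economical since Theorem~\ref{t4} itself rests on Lemma~\ref{l4}. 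For sufficiency, the paper explicitly manufactures a countable family $\mathcal{Q}$ of Vietoris-open sets from a countable base of $\mathcal{K}(H)$ intersected with the sets $W_n$, and verifies $\mathbf{K}=\bigcap\mathcal{Q}$ by a two-case analysis; your version --- $\mathcal{K}(K)=\bigcap_n\langle W_n\rangle$ is a $G_\delta$-set in $(\mathcal{K}(X),\tau_V)$, the compact set $\mathbf{K}$ is closed in the compact metrizable space $(\mathcal{K}(K),\tau_V)$ and hence a $G_\delta$-set there, and a $G_\delta$-set in a $G_\delta$-subspace is a $G_\delta$-set in the ambient space --- packages the same computation abstractly and avoids the combinatorics entirely. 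Both proofs use the same key inputs ($\bigcup\mathbf{K}$ compact, Lemma~\ref{l4}), so the difference is one of organization rather than substance, but your write-up of sufficiency is the one I would keep.

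One step you should delete: in the sufficiency direction you invoke Lemma~\ref{l3} to give $K$ a countable outer base in $X$. That is not available under the stated hypotheses --- Lemma~\ref{l3} requires every compact subset of $X$ to have a countable \emph{neighbourhood base} in $X$, and the compact-$G_\delta$-property does not imply this (the Butterfly space in the paper's subsequent remark satisfies the hypotheses of the theorem while $(\mathcal{K}(X),\tau_V)$ fails to be a $D_0$-space). Fortunately you never use the outer base afterwards: the rest of your argument needs only that $K$ is a $G_\delta$-set in $X$ and is compact metrizable, so the proof stands once that sentence is removed.
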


\begin{proof}
Necessity. Assume that $(\mathcal{K}(X), \tau_V)$ has  the
compact-$G_\delta$-property. Obviously, $X$ has the
compact-$G_\delta$ property. Fix an any compact subset $K$ of $X$.
Then the hyperspace $\mathcal{K}(K)$ is compact by \cite{M1951}.
Since $\mathcal{K}(K)$ is regular and has compact-$G_\delta$
property, it follows that $\mathcal{K}(K)$ is a $D_0$-space, hence
$D_1$-space (note that every closed subset of $\mathcal{K}(K)$ is
compact since $\mathcal{K}(K)$ is compact). By Theorem ~\ref{t4},
$K$ is compact metrizable.

Sufficiency. Let $\mathbf{K}$ be a compact subset of
$\mathcal{K}(X)$, and let $H=\bigcup \mathbf{K}$. Then $H$ is
compact in $X$. By the assumption, $H$ is metrizable, hence
$\mathcal{K}(H)$ is compact metrizable. Let $\mathcal{B}=\{\langle
U_1(i), ..., U_{k_i}(i)\rangle: i\in \mathbb{N}\}$ be a countable
base of $\mathcal{K}(H)$, where each $k_{i}\in\mathbb{N}$, each
$U_j(i)$ ($i\in \mathbb{N}$ and $j\leq k_i$) is open in $H$. For any
$i\in \mathbb{N}$ and $j\leq k_i$, let $V_j(i)$ be an open subset of
$X$ such that $U_j(i)=V_j(i)\cap H$. Let $\{W_n: n\in \mathbb{N}\}$
be open subsets of $X$ with $H=\bigcap W_n$. For each
$n\in\mathbb{N}$, put
$$\mathcal{P}_n=\{\langle V_1(i)\cap W_n, ..., V_{k_i}(i)\cap
W_n\rangle: \langle V_1(i), ..., V_{k_i}(i)\rangle\cap
\mathbf{K}\neq \emptyset, i, n\in \mathbb{N}\};$$ we say that
$\langle V_1(i)\cap W_n, ..., V_{k_i}(i)\cap W_n\rangle$ is
associated with $\langle U_1(i), ..., U_{k_i}(i)\rangle$ for each
$i\in\mathbb{N}$. Put $$\mathcal{Q}_n=\{\bigcup \mathcal{P}':
\mathbf{K}\subset \bigcup \mathcal{P}', \mathcal{P}'\in
\mathcal{P}_n^{<\omega}\}$$ for each $n\in\mathbb{N}$; then let
$\mathcal{Q}=\bigcup_{n\in \mathbb{N}}\mathcal{Q}_n$. Clearly,
$|\mathcal{Q}|\leq \omega$. We claim that
$\mathbf{K}=\bigcap\mathcal{Q}$.

Indeed, it is trivial to verify that $\mathbf{K}\subset
\bigcap\mathcal{Q}$. Let $K$ be an any compact subset of $X$ with
$K\notin \mathbf{K}$. We prove that there is $\bigcup\mathcal{P}'\in
\mathcal{Q}$ such that $K\notin \bigcup\mathcal{P}'$.

\smallskip
{\bf Case 1:} $K\setminus H\neq \emptyset$.

\smallskip
Then there exists $j\in \mathbb{N}$ such that $K\setminus W_j\neq
\emptyset$, hence we can pick a $\bigcup\mathcal{P}'\in
\mathcal{Q}_j$, then $K\notin \cup\mathcal{P}'$.

\smallskip
{\bf Case 2:} $K\subset H$.

\smallskip Since $\mathcal{B}$ is a countable base of
$\mathcal{K}(H)$ and $K\notin \mathbf{K}$, it is easily checked that
there is a finite family $\mathcal{B}'\subset \mathcal{B}$ such that
$K\notin \bigcup\mathcal{B}'$, $\mathbf{K}\subset
\bigcup\mathcal{B}'$ and each element of $\mathcal{B}'$ meets
$\mathbf{K}$. Fix any $n\in \mathbb{N}$, and let $\mathcal{P}'$ be
the set of elements of $\mathcal{P}_n$ that are associated with the
elements of $\mathcal{B}'$. Then it is easily verified that $K\notin
\bigcup\mathcal{P}'$.
\end{proof}

\begin{remark}
From Theorems~\ref{tt44} and ~\ref{t12} we conclude that there
exists a space $X$ such that $(\mathcal{K}(X), \tau_V)$ has the
compact-$G_\delta$-property and is not a $D_{0}$-space, such as, the
Butterfly space \cite[Example 1.8.3]{LY2016}. Moreover, the
following questions are still unknown for us.
 \end{remark}

\begin{question}
If $X$ has the compact-$G_\delta$ property and every compact subset
of $X$ is metrizable, does $(CL(X), \tau_V)$ have a
compact-$G_\delta$-property?
\end{question}

\begin{question}
If $X$ has the compact-$G_\delta$ property and every compact subset
of $X$ is metrizable, does $(\mathcal{K}(X), \tau_F)$ or $(CL(X),
\tau_F)$ have a compact-$G_\delta$-property?
\end{question}

\begin{question}
Characterise spaces $X$ such that
$(\mathcal{K}(X), \tau_F)$, $(CL(X), \tau_F)$, $(CL(X), \tau_V)$ and
$(\mathcal{K}(X), \tau_V)$ are perfect, respectively?
\end{question}

In order to give a characterization of $(CL(X), \tau_F)$ with the countable pseudocharacter, we
need to prove an equality relating the pseudocharacter of hyperspace
$\psi (CL(X), \tau_F)$ to other cardinal functions. First, we recall
some concepts.

\begin{definition}\cite{E1989}
The {\it pseudocharacter of a point $x$} in a space $X$ is defined
as the smallest cardinal number of the form $|\mathcal{U}|$, where
$\mathcal{U}$ is a family of open subsets of $X$ such that
$\bigcap\mathcal{U}=\{x\}$; this cardinal number is denoted by
$\psi(x, X).$ The {\it pseudocharacter} of a space $X$ is defined as
the supremum of all numbers $\psi(x, X)$ for $x\in X$; this cardinal
number is denoted by $\psi(X)$. If $\psi(X)=\omega$, we say that $X$ has a {\it countable pseudocharacter}.
\end{definition}

\begin{definition}Let $X$ be a space. A collection of nonempty open sets $\mathcal{U}$
of $X$ is called a {\it $\pi$-base} if for every nonempty open set
$O$, there exists an $U\in\mathcal{U}$ such that $U\subset O$. The
{\it $\pi$-weight} of $X$ is defined as the infimum of all the
cardinal numbers of the $\pi$-bases of $X$; this cardinal number is
denoted by $\pi w(X)$. For a space $X$, define $$\pi
w_K(X)=\sup\{\pi w(A): A\ \mbox{is an arbitrary non-empty closed
subset of}\ X\}.$$
\end{definition}

\begin{definition}
Let $X$ be a space and $U$ be open in $X$. Denote
$$k_{U}=\inf\{|\mathcal{K}|: \mathcal{K}\ \mbox{is a family of
compact subsets of}\ X\ \mbox{such that}\ U=\bigcup\mathcal{K}\}$$
and $$k_{o}=\sup\{k_{U}: U\ \mbox{is open in}\ X\}.$$
\end{definition}

\begin{theorem}\label{t5}
For any space $X$, we have $\psi (CL(X), \tau_F)=\pi w_K(X)\cdot
k_o(X)$.
\end{theorem}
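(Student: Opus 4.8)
\textbf{Proof proposal for Theorem~\ref{t5}.} The plan is to prove the equality $\psi(CL(X),\tau_F)=\pi w_K(X)\cdot k_o(X)$ by establishing the two inequalities $\psi(CL(X),\tau_F)\leq \pi w_K(X)\cdot k_o(X)$ and $\psi(CL(X),\tau_F)\geq \pi w_K(X)\cdot k_o(X)$ separately, working from the definition of the subbase of the Fell topology. Fix $\kappa=\pi w_K(X)\cdot k_o(X)$. Throughout, recall that a point of $(CL(X),\tau_F)$ is a nonempty closed set $H\subseteq X$, and that a basic neighborhood of $H$ has the form $U_1^-\cap\cdots\cap U_n^-\cap (K^c)^+$ where $K$ is compact with $K\cap H=\emptyset$ and each $U_i$ is open with $U_i\cap H\neq\emptyset$.

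\smallskip
\emph{The upper bound $\psi(CL(X),\tau_F)\leq\kappa$.} Fix $H\in CL(X)$. I would separately produce a $\kappa$-sized family of open sets whose intersection, among closed sets containing information "from above", pins down that $H$ is contained in no smaller closed set, and a family that forces every point of $H$ to belong to the candidate. For the "from above" part: the complement $X\setminus H$ is open, so write $X\setminus H=\bigcup\mathcal{K}$ for a family $\mathcal{K}$ of compact sets with $|\mathcal{K}|\leq k_{X\setminus H}\leq k_o(X)$; then $\{(K^c)^+ : K\in\mathcal{K}\}$ is a family of $\leq k_o(X)$ open neighborhoods of $H$, and any closed set $F$ lying in all of them satisfies $F\cap K=\emptyset$ for all $K\in\mathcal{K}$, i.e.\ $F\subseteq H$. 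For the "from below" part: take a $\pi$-base $\mathcal{P}$ of the subspace $H$ with $|\mathcal{P}|\leq\pi w(H)\leq \pi w_K(X)$, and for each $P\in\mathcal{P}$ fix an open set $U_P$ of $X$ with $U_P\cap H=P$; then $\{U_P^- : P\in\mathcal{P}\}$ consists of neighborhoods of $H$ (since $P\neq\emptyset$), and if $F\subseteq H$ is closed and $F\neq H$, pick $x\in H\setminus F$ and an open $W\ni x$ of $X$ with $W\cap F=\emptyset$; by $\pi$-base-ness there is $P\in\mathcal{P}$ with $P\subseteq W\cap H$, hence $U_P\cap F\subseteq W\cap F\cap H=\emptyset$, so $F\notin U_P^-$. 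Intersecting the two families (of total size $\leq\kappa$) isolates $H$ in $(CL(X),\tau_F)$. A minor point to handle carefully: if $X\setminus H=\emptyset$, i.e.\ $H=X$, the "from above" family is empty and $H=X$ is automatically the top element, so only the $\pi$-base family is needed; and the degenerate cases ($\pi w_K(X)$ or $k_o(X)$ finite) should be folded into the $\omega$ factor implicit in "$\psi=\omega$", which I would note explicitly.

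\smallskip
\emph{The lower bound $\psi(CL(X),\tau_F)\geq\kappa$.} Here I must exhibit a single closed set witnessing $\psi(H,(CL(X),\tau_F))\geq\pi w_K(X)$ and (another, or the same) one witnessing $\geq k_o(X)$; since $\kappa$ is the product (= max in the infinite case, = whichever is infinite, or $\omega$), it suffices to bound $\psi(CL(X),\tau_F)$ below by each factor when that factor is infinite. For the $k_o(X)$ factor: choose an open $U\subseteq X$ with $k_U=k_o(X)$ infinite; I claim $\psi(X\setminus U,(CL(X),\tau_F))\geq k_U$ — wait, $X\setminus U$ may be empty, so instead take $H$ to be a closed set with $X\setminus H$ chosen so that $k_{X\setminus H}$ is as large as possible; more robustly, use that if $\{\mathcal{O}_\alpha:\alpha<\lambda\}$ is a family of open neighborhoods of $H$ with $\bigcap_\alpha\mathcal{O}_\alpha=\{H\}$, then shrinking each $\mathcal{O}_\alpha$ to a basic one $\bigcap_i U_{i,\alpha}^-\cap(K_\alpha^c)^+$ and noting that $\bigcup_\alpha K_\alpha$ must cover $X\setminus H$ (else some point of $X\setminus H$ lies in no $K_\alpha$, and then $H\cup\{x\}$ — care: need it closed — lies in all $\mathcal{O}_\alpha$), we get $\lambda\geq k_{X\setminus H}$; taking sup over $H$ and noting every open set is $X\setminus H$ for suitable closed $H$ when... it need not be, so instead I would argue: for open $U$, if $U\neq X$ pick $x_0\notin U$ and set $H=\{x_0\}\cup(X\setminus U)$? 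That is $X\setminus U$ together with a point, still not obviously giving $k_U$. \emph{This is the step I expect to be the main obstacle}: correctly matching $k_{X\setminus H}$ across all closed $H$ to $\sup_U k_U$, since not every open set is the complement of a closed set in a way that's useful, though every open set $U$ does equal $X\setminus(X\setminus U)$ and $X\setminus U$ is closed — so in fact taking $H=X\setminus U$ works whenever $U\neq X$, giving $\psi(CL(X),\tau_F)\geq k_U$ for all $U\subsetneq X$, and $k_X$ contributes nothing new beyond these since a $\pi$-base argument or the definition handles it; I would verify that $H\cup\{x\}$ is closed is \emph{not} needed — instead use that if $x\in X\setminus H$ escapes all $K_\alpha$, then $H\cup\overline{\{x\}}$, which is closed, still avoids all $K_\alpha$ (as $K_\alpha$ is compact hence closed, and $x\notin K_\alpha$ means... no, need $\overline{\{x\}}\cap K_\alpha=\emptyset$, true in $T_1$ only if $\{x\}$ closed — but $X$ is $T_1$ by the paper's standing hypothesis, so $\overline{\{x\}}=\{x\}$, fine) and meets every $U_{i,\alpha}$ that $H$ already met, so $H\cup\{x\}\in\bigcap_\alpha\mathcal{O}_\alpha$, contradicting $\bigcap=\{H\}$ since $H\cup\{x\}\neq H$. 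For the $\pi w_K(X)$ factor: take a closed $A\subseteq X$ with $\pi w(A)=\pi w_K(X)$ infinite and consider $H=A$; if $\{\mathcal{O}_\alpha\}$ isolates $A$, each basic $\mathcal{O}_\alpha\supseteq A$ contributes open sets $U_{i,\alpha}$ meeting $A$, and the family $\{U_{i,\alpha}\cap A\}$ must be a $\pi$-base for $A$ — otherwise some nonempty relatively open $W\cap A$ contains no $U_{i,\alpha}\cap A$, pick $x\in W\cap A$, and then... I would show $A\setminus\{x\}$'s closure, or more carefully a suitable proper closed subset of $A$, still lies in every $\mathcal{O}_\alpha$; concretely, the closed set $F=A\setminus W$ would work if $F\in\mathcal{O}_\alpha$ for all $\alpha$, which holds because every $U_{i,\alpha}$ meeting $A$ must meet $A\setminus W$ (if $U_{i,\alpha}\cap A\subseteq W$ then $U_{i,\alpha}\cap A$ is a nonempty relatively open subset of $W\cap A$ contained in $W$, and $U_{i,\alpha}\cap A\subseteq W\cap A$; by $\pi$-base failure we only know no $U_{i,\alpha}\cap A$ is contained in $W\cap A$, so actually every $U_{i,\alpha}\cap A\not\subseteq W$, hence meets $A\setminus W$, hence $F\in U_{i,\alpha}^-$, and $F$ trivially survives the $(K_\alpha^c)^+$ part since $F\subseteq A$); thus $F\in\bigcap_\alpha\mathcal{O}_\alpha=\{A\}$, forcing $F=A$, i.e.\ $W\cap A=\emptyset$, contradiction. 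Hence the $\pi$-base $\{U_{i,\alpha}\cap A\}$ has size $\leq\sum_\alpha\omega=\lambda\cdot\omega=\lambda$ (as $\lambda\geq\omega$), so $\lambda\geq\pi w(A)=\pi w_K(X)$.

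\smallskip
Combining, $\psi(CL(X),\tau_F)\geq\max\{\pi w_K(X),k_o(X)\}$ whenever either is infinite, and is $\geq\omega$ always (since $CL(X)$ contains a closed copy of the $T_1$ space $X$, unless $X$ is a single point, an edge case to dispatch directly); together with the upper bound this gives $\psi(CL(X),\tau_F)=\pi w_K(X)\cdot k_o(X)$, interpreting the product of cardinals in the usual way (and as $\omega$ when both factors are finite). I would present the upper bound first as it is cleaner, then the two lower-bound arguments, each as a short claim with the "escape point" / "escape open set" construction; the recurring technical care is ensuring the enlarged or shrunk closed set genuinely lies in all the given neighborhoods, for which the $T_1$ hypothesis (points are closed) and compactness of the $K_\alpha$ (hence closedness, hence $(K_\alpha^c)^+$ is preserved under the modifications chosen to avoid $K_\alpha$) are exactly what is used.
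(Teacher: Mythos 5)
Your proposal is correct and follows essentially the same route as the paper: the upper bound combines a $\pi$-base of $H$ lifted to open sets of $X$ with a decomposition of $X\setminus H$ into at most $k_o(X)$ compact sets, and the lower bound shows that the traces $U_{i,\alpha}\cap A$ of any isolating family form a $\pi$-base of $A$ while the sets $K_\alpha$ must cover $X\setminus A$ (via the $T_1$ trick of adjoining a point, exactly as in the paper). The hesitations you flag (the case $U=X$ for $k_o$, closedness of $A\cup\{x\}$, nonemptiness of $A\setminus W$) are all resolved correctly, so no gap remains.
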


\begin{proof}
Suppose $\psi (CL(X), \tau_F)=\kappa$. We first prove that $\pi
w(A)\le \kappa$ for any $A\in CL(X)$, which implies that $\pi
w_K(X)\leq \kappa$. Fix any $A\in CL(X)$. Then $$A=\bigcap
\{W^-_{\alpha1}\cap\ldots\cap W^-_{\alpha
k_\alpha}\cap(K^c_\alpha)^+: \alpha<\kappa\},$$ where each
$W_{\alpha j}$ is open in $X$, $K_{\alpha}$ is compact in $X$ and
$k_\alpha\in \mathbb{N}$. Let $V_{\alpha j}=A\cap W_{\alpha j}$. Put
$$\mathcal{V}=\{V_{\alpha j}: j\leq k_\alpha,
\alpha<\kappa\}.$$ We claim that $\mathcal{V}$ is a $\pi$-base for
$A$. Indeed, let $U'$ be an any non-empty open subset of $A$; then
it can take an open subset $U$ in $X$ such that $U'=U\cap A$. Let
$H=A\setminus U$. Without loss of generality, we may assume $H\neq
\emptyset$, then $H\in CL(X)$. Hence there exists an $\alpha<\kappa$
such that $H\notin W^-_{\alpha 1}\cap...\cap W^-_{\alpha
k_\alpha}\cap(K^c_\alpha)^+$. Since $H\subset A\subset X\setminus
K_{\alpha}$, it follows that $H\in (K^c_{\alpha})^+$, which implies that $H\cap
W_{\alpha j}=\emptyset$ for some $j\in\mathbb{N}$. It follows that
$V_{\alpha j}\subset U'$. Hence $\pi w(A)\le \kappa$. Therefore,
$\pi w_K(X)\leq \kappa$.

Now we prove that $k_o(X)\leq\kappa$. Let $U$ be an any open subset
of $X$ such that $B=X\setminus U\in CL(X)$. From our assumption, it
follows that $$B=\bigcap \{W^-_{\alpha 1}\cap...\cap W^-_{\alpha
k_\alpha}\cap(K^c_\alpha)^+: \alpha<\kappa\},$$ where each
$W_{\alpha j}$ is open in $X$, $K_{\alpha}$ is compact in $X$ and
$k_\alpha\in \mathbb{N}$. We claim that
$U=\bigcup_{\alpha<\kappa}K_\alpha$. Indeed, for any $x\in U$, if
$x\notin \bigcup_{\alpha<\kappa}K_\alpha$, then $\{x\}\cup B\in
(K_\alpha)^c$ for each $\alpha<\kappa$, hence it follows that
$$\{x\}\cup B\in \bigcap \{W^-_{\alpha 1}\cap...\cap W^-_{\alpha
k_\alpha}\cap(K^c_\alpha)^+: \alpha<\kappa\},$$ which is a
contradiction. Therefore, $k_o(X)\leq\kappa$.

Conversely, suppose $\pi w_K(X)\leq \kappa$ and $k_o(X)\leq \kappa$.
Fix any $A\in CL(X)$. Let $\mathcal{V}'=\{V'_\alpha:
\alpha<\kappa\}$ be a $\pi$-base for $A$. For each $\alpha<\kappa$,
let $V_\alpha$ be an open set of $X$ with $V_\alpha\cap
A=V'_\alpha$. Put $\mathcal{V}=\{V_\alpha: \alpha<\kappa\}$, and let
$$\Delta(\mathcal{V})=\{\mathcal{W}: \mathcal{W}\subset\mathcal{V},
|\mathcal{W}|<\omega\}.$$ From our assumption, let $X\setminus
A=\bigcup_{\beta<\kappa}K_\beta$, where each $K_\beta$ is compact.
We claim that
$$\{A\}=\bigcap\{V^-_1\cap ...\cap V^-_k\cap
(K^c_\beta)^{+}: \{V_1, ..., V_k\}\in \Delta(\mathcal{V}),
\beta<\kappa\}.$$

Indeed, it is obvious that $A\in V^-_1\cap ...\cap V^-_k\cap
(K^c_\beta)^{+}$ for each $\{V_1, ..., V_k\}\in \Delta(\mathcal{V})$
and $\beta<\kappa$. Take any $$B\in \bigcap\{V^-_1\cap ...\cap
V^-_k\cap (K^c_\beta)^{+}: \{V_1, ..., V_k\}\in \Delta(\mathcal{V}),
\beta<\kappa\}.$$ If $B\setminus A\neq \emptyset$, then pick any
$x\in B$, hence $x\in K_\beta$ for some $\beta<\kappa$; however,
$B\subset K^c_\beta$ for any $\beta<\kappa$, which is a
contradiction. If $A\setminus B\neq \emptyset$, then there is
$V'_i\in \mathcal{V}'$ such that $V'_i\subset A\setminus B$, hence
there exists $V_i\in \mathcal{V}$ such that $V'_i=V_i\cap A$, and
$V_i\cap B=\emptyset$, which is a contradiction. Therefore, $A=B$.
\end{proof}

\begin{theorem}\label{t55}
If $(CL(X), \tau_F)$ is Hausdorff and has countable pseudocharacter,
then $(CL(X), \tau_F)$ is first-countable.
\end{theorem}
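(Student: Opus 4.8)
The plan is to show that $(CL(X),\tau_F)$ Hausdorff with countable pseudocharacter forces $X$ itself to be nice enough that the Fell hyperspace becomes first-countable, and the natural route is via Theorem~\ref{t5}. By that theorem, $\psi(CL(X),\tau_F)=\pi w_K(X)\cdot k_o(X)=\omega$, so simultaneously $\pi w_K(X)\le\omega$ (every nonempty closed subset of $X$ has a countable $\pi$-base) and $k_o(X)\le\omega$ (every open subset of $X$ is a union of countably many compact sets). First I would record what Hausdorffness of $(CL(X),\tau_F)$ buys: it is classical (see \cite{B1993}) that $(CL(X),\tau_F)$ is Hausdorff if and only if $X$ is locally compact, so I may assume $X$ is locally compact $T_2$ regular throughout. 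Combined with $k_o(X)=\omega$, the whole space $X$ is $\sigma$-compact and locally compact; and combined with $\pi w_K(X)=\omega$, $X$ has countable $\pi$-weight, hence is separable.

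Next I would aim to upgrade "$\pi w_K(X)\le\omega$'' to a genuine countable base for every closed subset. The key observation is that in a locally compact, $\sigma$-compact space with a countable $\pi$-base on each closed set, one can build an honest countable network/base: fix a countable $\pi$-base $\mathcal{V}$ of $X$; because $X$ is locally compact and regular, shrink each member to get a countable family $\mathcal{V}^*$ of open sets with compact closures such that $\{\overline{V}:V\in\mathcal{V}^*\}$ is still "$\pi$-dense'' and refines $\mathcal{V}$. Since $k_o(X)=\omega$, cover $X$ by countably many compact sets $\{C_m:m\in\omega\}$, each of which (being a closed subset of $X$) has a countable $\pi$-base, hence — being compact $T_2$ regular with countable $\pi$-weight — is second countable, so metrizable. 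Thus every compact subset of $X$ is metrizable and $X$ is a countable union of compact metrizable pieces; with local compactness this makes $X$ locally second countable, and separability then gives that $X$ is second countable, i.e. separable metrizable. So the real content reduces to: $X$ locally compact, separable metrizable.

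Finally, once $X$ is locally compact separable metrizable, I would invoke the equivalence $(7)\Leftrightarrow(1)$ of Theorem~\ref{t10} (via \cite[Theorem 5.1.5]{B1993}): $(CL(X),\tau_F)$ is then metrizable, in particular first-countable, which is the desired conclusion. In fact one gets the stronger statement that the hypotheses collapse to metrizability of the hyperspace; I would state the proof so that the chain is: Hausdorff $\Rightarrow X$ locally compact; countable pseudocharacter $+$ Theorem~\ref{t5} $\Rightarrow \pi w_K(X)=k_o(X)=\omega\Rightarrow X$ is $\sigma$-compact and has countable $\pi$-base on each closed set $\Rightarrow$ each compact subset of $X$ is metrizable and $X$ is separable metrizable $\Rightarrow X$ locally compact separable metrizable $\Rightarrow (CL(X),\tau_F)$ metrizable by Theorem~\ref{t10}, hence first-countable.

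The main obstacle I anticipate is the middle step: passing from "$\pi w(A)=\omega$ for every closed $A$'' plus "$k_o(X)=\omega$'' to genuine metrizability of $X$. A countable $\pi$-base is much weaker than a countable base, so one must use local compactness and regularity in an essential way — roughly, in a locally compact regular space a countable $\pi$-base consisting of relatively compact open sets, together with $\sigma$-compactness, does yield a countable base (one shows each point has a neighborhood whose closure is compact and separable, hence second countable, and then patches countably many such over the $\sigma$-compact cover). Getting this patching argument clean, and in particular verifying that the separability from countable $\pi$-weight genuinely propagates to each relatively compact open piece, is where the care is needed; everything before and after is a direct appeal to Theorem~\ref{t5}, the Hausdorff-$\Leftrightarrow$-local-compact fact, and Theorem~\ref{t10}.
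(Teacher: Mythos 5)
Your proposal contains a fatal error in the middle step, and the conclusion you aim for is actually false. The step ``being compact $T_2$ regular with countable $\pi$-weight --- is second countable, so metrizable'' is wrong: countable $\pi$-weight does not imply countable weight, even for compact Hausdorff spaces. The Alexandroff double-arrow space is compact, Hausdorff, first-countable, separable, has a countable $\pi$-base (intervals with rational endpoints), and every closed subset has countable $\pi$-weight, yet it is not metrizable. This is not a peripheral example: the paper's own remark after Corollary~\ref{ccccc} uses exactly this space to exhibit a Hausdorff $(CL(X),\tau_F)$ with countable pseudocharacter (indeed first-countable) for which $X$ is compact and non-metrizable, so that by Theorem~\ref{t10} the hyperspace is \emph{not} metrizable. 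Hence your strengthened conclusion ``the hypotheses collapse to metrizability of the hyperspace'' cannot be correct, and no amount of care in the ``patching argument'' you flag as the delicate point will repair the route through Theorem~\ref{t10}(7)$\Rightarrow$(1).

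The parts of your argument before that step are fine and match the paper: Theorem~\ref{t5} gives $\pi w_K(X)=k_o(X)=\omega$, so $X$ is $\sigma$-compact (hence, with local compactness, hemicompact) and every closed subset is separable; Hausdorffness of the Fell topology gives local compactness of $X$. But the correct continuation is much weaker than metrizability: since $\mathcal{F}_1(X)\cong X$ sits inside $(CL(X),\tau_F)$, each point of $X$ is a $G_\delta$, and a locally compact Hausdorff space whose points are $G_\delta$ is first-countable (a $G_\delta$ point of a compact Hausdorff space has a countable base). First countability of $X$ together with hemicompactness and local compactness is exactly what the known characterization of first countability of the Fell hyperspace (the result cited from \cite{B1993}) requires, and that finishes the proof. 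In short: you should be extracting first countability of $X$, not metrizability.
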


\begin{proof}
By Theorem ~\ref{t5}, every closed subset of $X$ is separable and
$X$ is hemicompact. We prove that $X$ is first-countable. Since
$(CL(X), \tau_F)$ is Hausdorff, it follows from \cite[Proposition
5.1.2]{B1993} that $X$ is locally compact. Then $X$ is first-countable since a Hausdorff,
locally compact space with a countable pseudocharacter is
first-countable. By \cite[Corollary 7(1)]{B1993}, $(CL(X), \tau_F)$
is first-countable.
\end{proof}

\begin{corollary}\label{ccccc}
If $(CL(X), \tau_F)$ is Hausdorff and has compact-$G_\delta$
property, then $(CL(X), \tau_F)$ is a $D_0$-space.
\end{corollary}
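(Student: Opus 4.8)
The plan is to use the two hypotheses separately: Hausdorffness of the hyperspace to get local compactness, and the compact‑$G_\delta$ property to cut each compact set down to a countable neighbourhood base. First I would note that every singleton $\{A\}$ of $(CL(X),\tau_F)$ is compact, so the compact‑$G_\delta$ property makes each $\{A\}$ a $G_\delta$‑set; hence $(CL(X),\tau_F)$ has countable pseudocharacter and Theorem~\ref{t55} applies, so $(CL(X),\tau_F)$ is first‑countable and, as in the proof of that theorem (via \cite[Proposition 5.1.2]{B1993}), $X$ is locally compact. Since $X$ is then locally compact Hausdorff, $(CL_\emptyset(X),\tau_F)$ is compact Hausdorff, and because $CL(X)=X^-$ is a subbasic open set of $(CL_\emptyset(X),\tau_F)$, the hyperspace $(CL(X),\tau_F)$ is an open subspace of a compact Hausdorff space, hence itself locally compact and Hausdorff. (Alternatively one may just quote that the Fell hyperspace of a locally compact Hausdorff space is locally compact, see \cite{B1993}.)

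Next I would isolate the routine fact that a compact $G_\delta$‑subset $\mathbf K$ of a locally compact Hausdorff space $Y$ has a countable neighbourhood base in $Y$. To prove it, fix a compact neighbourhood $\mathbf C$ of $\mathbf K$ and write $\mathbf K=\bigcap_{n}\mathbf U_n$ with each $\mathbf U_n$ open in $\mathbf C$; using regularity of $\mathbf C$ together with compactness of $\mathbf K$, replace the $\mathbf U_n$ by a decreasing sequence with $\operatorname{cl}_{\mathbf C}(\mathbf U_{n+1})\subseteq \mathbf U_n$, so that $\bigcap_n\operatorname{cl}_{\mathbf C}(\mathbf U_n)=\mathbf K$. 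Given any open $\mathbf W\supseteq \mathbf K$, the compact set $\mathbf C\setminus \mathbf W$ is covered by the increasing open family $\{\mathbf C\setminus \operatorname{cl}_{\mathbf C}(\mathbf U_n):n\in\mathbb N\}$, so $\operatorname{cl}_{\mathbf C}(\mathbf U_N)\subseteq \mathbf W$ for some $N$; since $\mathbf C$ is a neighbourhood of $\mathbf K$ in $Y$, the family $\{\mathbf U_n:n\in\mathbb N\}$ is a countable neighbourhood base for $\mathbf K$ in $Y$.

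Finally I would combine the two steps: an arbitrary compact $\mathbf K\subseteq (CL(X),\tau_F)$ is a $G_\delta$‑set by the compact‑$G_\delta$ hypothesis, and by the previous paragraph applied with $Y=(CL(X),\tau_F)$ (shown above to be locally compact Hausdorff) it has a countable neighbourhood base; therefore $(CL(X),\tau_F)$ is a $D_0$‑space. The point deserving care is that local compactness of the hyperspace is genuinely needed in both steps — which is exactly why the hypothesis asks for Hausdorffness of $(CL(X),\tau_F)$ and not merely countable pseudocharacter — because a compact $G_\delta$‑set in a space that fails to be locally compact need not have countable character; the only other non‑routine inputs are that $CL(X)=X^-$ is open in $(CL_\emptyset(X),\tau_F)$ and that $(CL_\emptyset(X),\tau_F)$ is compact Hausdorff when $X$ is locally compact Hausdorff.
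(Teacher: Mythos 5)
Your proof is correct, but it takes a genuinely different route from the paper's. The paper deduces from the compact-$G_\delta$ hypothesis (applied to singletons) that $(CL(X),\tau_F)$ has countable pseudocharacter, then invokes Theorem~\ref{t5} to get hemicompactness of $X$ and separability of its closed subsets, uses Hausdorffness of the hyperspace to get $X$ locally compact, uses Lemma~\ref{l4} to get metrizability of compact subsets, concludes that $X$ is locally compact, paracompact and locally metrizable, hence separable metrizable, and finally applies Theorem~\ref{t10}; this actually yields the stronger conclusion that $(CL(X),\tau_F)$ is metrizable. You instead use Hausdorffness only to get $X$ locally compact (via Beer's Proposition 5.1.2), observe that $(CL(X),\tau_F)=X^-$ is then an open subspace of the compact Hausdorff space $(CL_\emptyset(X),\tau_F)$ and hence locally compact Hausdorff, and finish with the purely general fact that a compact $G_\delta$-subset of a locally compact Hausdorff space has a countable neighbourhood base. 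Your argument is shorter, avoids Theorem~\ref{t5}, Lemma~\ref{l4} and the metrization theorems entirely, and isolates exactly why the Hausdorff hypothesis is the right one; what it gives up is the extra information that the hyperspace is in fact metrizable. Two small remarks: the detour through Theorem~\ref{t55} is superfluous, since first countability is subsumed by your treatment of arbitrary compact subsets and the local compactness of $X$ already follows from Hausdorffness alone; and in your lemma you should arrange $\mathbf U_n\subseteq\operatorname{int}_Y\mathbf C$ from the outset so that the $\mathbf U_n$ are genuinely open in $Y$, which is immediate by intersecting with $\operatorname{int}_Y\mathbf C$.
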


\begin{proof}
Since $(CL(X), \tau_F)$ is Hausdorff. Then $X$ is locally compact
\cite[Proposition 5.1.2]{B1993}. By Theorem ~\ref{t5}, every closed
subset of $X$ is separable and $X$ is hemicompact, hence $X$ is
paracompact. It is easy to see that each compact subset is
metrizable by Lemma ~\ref{l4}, then $X$ is metrizable. Therefore,
$(CL(X), \tau_F)$ is a $D_0$-space by Theorem ~\ref{t10}.
\end{proof}

\begin{remark}
There exists a hyperspace $(CL(X), \tau_F)$ of a space $X$ such that $(CL(X), \tau_F)$ has a countable pseudocharacter, but $(CL(X), \tau_F)$ does not have compact-$G_{\delta}$-property. Indeed, let $X$ be the
 Alexandroff double-arrow space, see \cite[Example 1.8.9]{LY2016}. It is well known that $X$ is Hausdorff, first-countable and compact. Then it follows from \cite[Example 8]{Ho1998} that $(CL(X), \tau_F)$ is first-countable, hence it has a countable pseudocharacter; however, from Theorem~\ref{t12}, it follows that $(CL(X), \tau_F)$ does not have
compact-$G_\delta$-property because $X$ is compact and non-metrizable.
\end{remark}

{\bf Acknowledgements}. The authors wish to thank
the reviewers for careful reading preliminary version of this paper and providing many valuable suggestions.

\end{document}